\DeclareMathOperator{\Span}{span}
\DeclareMathOperator{\rank}{rank}
\DeclareMathOperator{\im}{im} 
\DeclareMathOperator{\cone}{cone}
\DeclareMathOperator{\relint}{relint}
\newcommand{\RR}{\mathbb{R}}
\newcommand{\dd}[2]{\frac{\text{d} #1}{\text{d} #2}}
\newcommand{\eq}{X}
\newcommand{\vd}{$V^*$-directed\xspace}
\newcommand{\fp}{\sigma} 
\newcommand{\fpp}{\tau}  
\newcommand{\tc}{\kappa} 
\newcommand{\tr}{\intercal} 
\newcommand{\rd}{\hat} 
\newcommand{\LRARR}[4]{{\mbox{ \raise 0.4 mm \hbox{$#1$}}} \;
  \mathop{\stackrel{\displaystyle\longrightarrow}\longleftarrow}^{#3}_{#4}
  \; {\mbox{\raise 0.4 mm\hbox{$#2$}}}}
\newcommand{\ra}[1]{\xrightarrow{\hspace*{#1cm}}}
\newcommand{\la}[1]{\xleftarrow{\hspace*{#1cm}}}
\newcommand{\LR}[5]{{\mbox{ \raise 1.3 mm \hbox{$#1$}}} \;
  \mathop{\stackrel{\displaystyle\ra{#5}}{\la{#5}}}^{#3}_{#4} \;
  {\mbox{\raise 1.3 mm\hbox{$#2$}}}}
\newtheorem{thm}{Theorem}
\newtheorem{lem}[thm]{Lemma}
\theoremstyle{definition}
\newtheorem{dfn}[thm]{Definition}
\newtheorem{exa}[thm]{Example}
\title{
A deficiency-based approach to parametrizing positive equilibria of biochemical reaction systems
}
\author{
Matthew Johnston \thanks{Department of Mathematics, San Jose State, One Washington Square, San Jose, CA, USA 95192}
\and
Stefan M\"uller \thanks{Faculty of Mathematics, University of Vienna, Austria}
\and
Casian Pantea \thanks{Department of Mathematics, West Virginia University, Morgantown, WV, USA 26506}
}
\begin{document}

\maketitle

\begin{abstract}

We present conditions which guarantee a parametrization of the set of positive equilibria of a {\em generalized} mass-action system. Our main results state that (i) if the underlying generalized chemical reaction network has an {\em effective deficiency} of zero, then the set of positive equilibria coincides with the parametrized set of complex-balanced equilibria and (ii) if the network is weakly reversible and has a {\em kinetic deficiency} of zero, then the equilibrium set is nonempty and has a positive, typically rational, parametrization. Via the method of network translation, we apply our results to {\em classical} mass-action systems studied in the biochemical literature, including the EnvZ-OmpR and shuttled WNT signaling pathways. 
A parametrization of the set of positive equilibria of a (generalized) mass-action system is often a prerequisite for the study of multistationarity and allows an easy check for the occurrence of absolute concentration robustness (ACR), as we demonstrate for the EnvZ-OmpR pathway.

\end{abstract}

\noindent \textbf{Keywords:} chemical reaction network, chemical kinetics, deficiency, equilibrium \newline \textbf{AMS Subject Classifications:} 92C42,  34A34 



\section{Introduction}
\label{sec:intro}

Networks of biochemical reactions can be represented as directed graphs where the vertices are combinations of interacting species (so-called complexes) and the edges are the reactions. Under suitable assumptions, such as spatial homogeneity and sufficient dilution, the networks follow mass-action kinetics and give rise to a system of polynomial ordinary differential equations in the species concentrations.

The mathematical study of positive equilibria of mass-action systems is important for establishing the uniqueness of equilibria in invariant regions of the state space (so-called compatibility classes) or, conversely, for establishing the capacity for multistationarity (for example, in models of biological switches). Such analysis, however, is challenging due to the high-dimensionality of the dynamical system, the significant nonlinearities, and the number of (unknown) parameters. Recent work has consequently focused on developing network-based methods for parameterizing the set of positive equilibria. Monomial parametrizations (Laurent monomials) have been established in \cite{C-D-S-S,M-D-S-C,MR2014,J2014}, while rational parametrizations have been constructed in \cite{T-G,G-H-R-S,MD2017}. Based on parametrizations, the uniqueness of positive equilibria has been analyzed \cite{C-F-R,MR2012,M-F-R-C-S-D,Conradi2017}, and regions for multistationarity (in the space of rate constants) have been identified for specific models, such as phosphorylation networks \cite{H-F-C,C-F-M-W2016}.

In this paper, we develop a method for explicitly constructing positive, typically rational, parametrizations of the set of positive equilibria for a broad class of biochemical reaction networks. Our approach is based on an extension of deficiency theory, the concept of generalized mass-action systems, and the method of network translation. The deficiency of a chemical reaction network was introduced in \cite{F1,H} in the context of sufficient conditions for weakly reversible networks to have complex-balanced equilibria \cite{H-J1}. The notions of deficiency and complex balancing were subsequently extended to generalized mass-action systems in \cite{MR2012,MR2014}. Thereby the kinetic complex determining the reaction rate was allowed to differ from the (stoichiometric) complex determining the reaction vector. Finally, the method of network translation was introduced in \cite{J2014}, in order to relate a mass-action system to a generalized mass-action system that is dynamically equivalent, but has a different network structure. In particular, the translated network might be weakly reversible (even when the original network is not) and have a lower deficiency.

A generalized mass-action system for which the underlying network is weakly reversible and has deficiency zero is known to have an equilibrium set with a monomial parametrization \cite{MR2012,MR2014,J2014}. In this paper, we extend this framework to construct positive parametrizations for a significantly wider class of generalized networks. To this end, we introduce a new notion of deficiency called \emph{effective deficiency} based on the \emph{condensed network} of the generalized network. Our main results state that, if a weakly reversible generalized network has an effective deficiency and kinetic deficiency of zero, then the corresponding generalized mass-action system permits a positive parametrization of the set of positive equilibria. This parametrization can be computed by linear algebra techniques and does not require tools from algebraic geometry such as Gr\"{o}bner bases. 
Via network translation, we can apply our results to a broad class of mass-action systems.

For example, consider the following two-component signaling system, which is adapted from a histidine kinase example in \cite{C-F-M-W2016}:
\begin{equation}
\label{intro-example}
\begin{tikzcd}
X \arrow[r, "k_1"] & X_p \\[-0.15in]
X_p + Y \arrow[r,  yshift=+0.5ex,"k_2"] & X + Y_p \arrow[l,  yshift=-0.5ex,"k_3"]  \\[-0.15in]  
Y_p \arrow[r, "k_4"] & Y
\end{tikzcd}
\end{equation}
Thereby $X$ is a histidine kinase, $Y$ is a response regulator, and $p$ is a phosphate group.
The network is not weakly reversible and has deficiency one. Via network translation, the system~\eqref{intro-example} corresponds to the following generalized mass-action system:
\begin{equation}
\label{example2}
\begin{tikzcd}
\ovalbox{$\begin{array}{c} X + Y \\ (X) \end{array}$} \arrow[r,"k_1"] & 
\ovalbox{$\begin{array}{c} X_p + Y \\ (X_p+Y) \end{array}$} \arrow[d,xshift=+0.5ex,"k_2"] \\
\ovalbox{$\begin{array}{c} X + Y_p \\ (Y_p) \end{array}$} \arrow[u,"k_4"] & 
\ovalbox{$\begin{array}{c} X + Y_p \\ (X+Y_p) \end{array}$} \arrow[u,xshift=-0.5ex,"k_3"] \arrow[l,color=red,"\fp"]
\end{tikzcd}
\end{equation}
Thereby we put a box at each vertex of the graph
with the \emph{stoichiometric complex} at the top and the \emph{kinetic complex} (in brackets) at the bottom. 
The red arrow corresponds to a \emph{phantom edge}, that is, an edge which connects identical stoichiometric complexes. Phantom edges do not contribute to the associated system of ordinary differential equations and hence can be labeled arbitrarily. Thus the edge label $\fp > 0$ can be considered a free parameter.

Now, the network~\eqref{example2} is weakly reversible and, as it turns out, it has an \emph{effective deficiency} of zero and a \emph{kinetic deficiency} of zero. Our main results guarantee that the set of positive equilibria has a positive parametrization and, in fact, constructively yield the following rational parametrization:
\begin{equation}
\label{intro:param}
\left \{ \quad
\begin{array}{ll} 
x = \displaystyle{\frac{k_4}{\fp}} , \quad 
& x_p = \displaystyle{\frac{k_1(k_3+\fp)k_4}{k_2\fp^2\fpp}} , \\ 
y = \fpp , 
& y_p = \displaystyle{\frac{k_1}{\fp}} ,\end{array} 
\right.
\end{equation}
where $\fp, \fpp > 0$. Note that the `rate constant' $\fp>0$ in the network~\eqref{example2} appears explicitly in the parametrization~\eqref{intro:param}. Importantly, the construction of \eqref{intro:param} via Theorem~\ref{main2} depends on efficient methods from linear algebra such as generalized inverses. Our algorithm therefore represents a significant computational advantage over algebraic geometry methods such as Gr\"{o}bner bases. 

\medskip

The paper is organized as follows. In Section~\ref{sec:background}, we review the relevant terminology regarding generalized chemical reaction networks and introduce several new notions, including effective and phantom edges, parametrized sets of equilibria, condensed networks, and effective deficiency. 
In Section~\ref{sec:results}, we present the crucial Lemma~\ref{cones} and the main results of the paper, Theorems~\ref{main} and~\ref{main2}. In Section~\ref{sec:applications}, we discuss the method of network translation, which allows us to apply the results of Section~\ref{sec:results} to networks studied in the biochemical literature, such as the EnvZ-OmpR and shuttled WNT signaling pathways.  In the EnvZ-OmpR example, our parametrization immediately implies the occurrence of absolute concentration robustness (ACR). In Section~\ref{sec:conclusions}, we summarize our findings and present avenues for future work.

\medskip

Throughout the paper, we use the following notation:
\begin{itemize}
\item
\emph{Vector logarithm:} for $v \in \RR^{n}_{>0}$, $\ln v = (\ln v_1, \ldots, \ln v_n)^T \in \RR^{n}$.
\item
\emph{Vector exponential:} for $v \in \RR^{n}$, $e^v = (e^{v_1}, \ldots, e^{v_n})^T \in \RR^{n}$.
\item
\emph{Vector powers:} for $v \in \RR^n_{>0}$ and $w \in \RR^{n}$, $v^w = \prod_{i=1}^n v_i^{w_i} \in \RR$.
\item
\emph{Matrix powers:} for $v \in \RR^{n}_{>0}$ and $A \in \RR^{n \times m}$, $v^{A^T} = (\prod_{i=1}^n v_i^{A_{i1}},\ldots,\prod_{i=1}^n v_i^{A_{im}})^T \in \RR^m$.
\item
\emph{Hadamard product:} for $v, w \in \RR^{n}$, $v \circ w = (v_1w_1, \ldots, v_nw_n)^T \in \RR^{n}$.
\end{itemize}

\section{Mathematical framework}
\label{sec:background}

We give a brief introduction to the relevant terminology regarding generalized chemical reaction networks (which include classical chemical reaction networks). In particular, we distinguish between effective and phantom edges and introduce parametrized sets of equilibria. Further, we define condensed networks and the notion of effective deficiency. Finally, we introduce the helpful concept of \vd networks.

\subsection{Generalized mass-action systems}
\label{sec:gcrn}

A directed graph $G=(V,E)$ is given by a set of vertices $V = \{ 1, \ldots, m \}$ and a set of edges $E \subseteq V \times V$. We denote an edge $e=(i,j)\in E$ by $i \to j$ to emphasize that is directed from the source $i$ to the target $j$. We additionally define the set of \emph{source vertices} $V_s = \{ i \mid i \to j \in E \}$, that is, the set of vertices that appear as the source of some edge.
We call the connected components of a graph \emph{linkage classes} and the strongly connected components \emph{strong linkage classes}. If linkage classes and strong linkage classes coincide, we call the graph \emph{weakly reversible}. 

A generalized chemical reaction network is essentially a graph with two embeddings of the vertices in $\RR^n$. The notion was introduced in \cite{MR2012} and extended to the present form in \cite{MR2014}.

\begin{dfn}
\label{def:gcrn}
A \emph{generalized chemical reaction network} (GCRN) $(G,y,\tilde{y})$ is given by a directed graph $G=(V,E)$ without self-loops and two maps $y \colon V \to \RR^n$ and $\tilde{y} \colon V_s \to \RR^n$.
Thereby, $G$ is called the abstract reaction graph,
and $y(i), \tilde y(i) \in \RR^n$ are called the stoichiometric and kinetic-order complexes, respectively, assigned to vertex $i$.
\end{dfn}

In contrast to a classical chemical reaction network (see below), a GCRN has two complexes associated to each vertex. Thereby, the maps $y$ and $\tilde{y}$ are not required to be injective, and the same stoichiometric or kinetic-order complex may be assigned to several vertices.

When considering examples, we represent complexes $y, \tilde y \in \RR^n$ as formal sums of \emph{species} (often $\{X_1, X_2, \ldots, X_n \}$). The components of the complexes correspond to the coefficients in the sums, e.g., $y = (1,0,1,0,\ldots,0)$ is represented as $y = X_1 + X_3$.

\begin{dfn}
\label{def:gmas}
A {\em generalized mass-action system} (GMAS) $(G_k,y,\tilde{y})$ is given by a GCRN $(G,y,\tilde{y})$ with $G=(V,E)$ together with edge labels $k \in \RR^E_{>0}$, resulting in the labeled directed graph $G_k$. That is, every edge $i \to j \in E$ is labeled with a rate constant $k_{i \to j} \in \RR_{>0}$.
\end{dfn}

The ODE system associated with a GMAS is given by
\begin{equation} \label{gmas}
\dd{x}{t} = f^{G}_k(x) = \sum_{i \to j \in E} k_{i \to j} \, x^{\tilde y(i)} \, (y(j) - y(i)).
\end{equation}
We can rewrite the right-hand side of the ODE as
\begin{equation} \label{gmas_fk}
f^{G}_k(x) = Y I_E \, \text{diag}(k) (I_E^s)^T \, x^{\tilde Y} = Y A^G_k \, x^{\tilde Y} ,
\end{equation}
where $Y, \tilde Y \in \RR^{n \times V}$ are the matrices of stoichiometric and kinetic complexes, respectively, $I_E, I_E^s \in \RR^{V \times E}$ are the {\em incidence} and {\em source} matrices of the graph $G$, and $A^G_k \in \RR^{V \times V}$ is the resulting {\em Laplacian matrix} of the labeled directed graph $G_k$. 
Note that columns $\tilde y^j$ of $\tilde Y$ corresponding to non-source vertices $j \not\in V_s$ can be chosen arbitrarily since the corresponding columns $(I^s_E)^j$ of $I^s_E$ and hence the columns $(A_k^G)^j$ of $A_k^G$ are zero vectors. 

Notably, the change over time~\eqref{gmas} lies in the {\em stoichiometric subspace} $S = \im (Y I_E)$,
which suggests the definition of a {\em stoichiometric compatibility class} $(c' + S) \cap \RR^n_{\ge 0}$ with $c' \in \RR^n_{\ge 0}$. The {\em stoichiometric deficiency} is defined as $\delta = \dim (\ker Y \cap \im I_E)$. Equivalently, $\delta = m - \ell - s$, where $m = |V|$ is the number of vertices, $\ell$ is the number of linkage classes of $G$,
and $s = \dim S$ is the dimension of the stoichiometric subspace (for example, see \cite{J2014}). If $V=V_s$, that is, if every vertex is a source, we additionally define the {\em kinetic-order subspace} $\tilde S = \im ( \tilde Y I_E)$ and the {\em kinetic deficiency} $\tilde \delta = 
\dim (\ker \tilde Y \cap \im I_E)$. Equivalently, $\tilde \delta = m - \ell - \tilde s$, where $\tilde s = \dim \tilde S$ is the dimension of the kinetic-order subspace.

\begin{exa}
\label{example3}
Consider the GCRN $(G,y,\tilde y)$ with the abstract graph $G = (V,E)$ given by
\begin{equation}
\label{example10}
\begin{tikzcd}
1 \arrow[r] & 2 \arrow[d,xshift=0.5ex] \\
4 \arrow[u] & 3 \arrow[u,xshift=-0.5ex] \arrow[l]
\end{tikzcd}
\end{equation}
and $y$ and $\tilde y$ defined by 
\[
y(1) = X_1+X_2, \, y(2) = X_2 + X_3, \, y(3) = y(4) = X_1 + X_4,
\]
and
\[
\tilde y(1) = X_1, \, \tilde y(2) = X_2 + X_3, \, \tilde y(3) = X_1 + X_4, \, \tilde y(4) = X_4 . 
\]
This generalized network has four vertices in one linkage class and is weakly reversible. It has a two-dimensional stoichiometric subspace ($s = 2$) and a three-dimensional kinetic-order subspace ($\tilde s = 3$). It follows that the stoichiometric deficiency is one ($\delta = 4-1-2 = 1$) while the kinetic deficiency is zero ($\tilde \delta = 4-1-3 = 0$). The corresponding generalized mass-action system $(G_k,y)$ gives rise to the following system of ODEs \eqref{gmas_fk}
\begin{equation}
\label{ex:dyn}
\dd{x}{t} = f^{G}_k(x) = Y A_k^G x^{\tilde Y} = \left[ \begin{array}{cccc} 1 & 0 & 1 & 1 \\ 1 & 1 & 0 & 0 \\ 0 & 1 & 0 & 0 \\ 0 & 0 & 1 & 1 \end{array} \right] \left[ \begin{array}{cccc} -k_{1 \to 2} & 0 & 0 & k_{4 \to 1} \\ k_{1 \to 2} & -k_{2 \to 3} & k_{3 \to 2} & 0 \\ 0 & k_{2 \to 3} & -k_{3 \to 2}-k_{3 \to 4} & 0 \\ 0 & 0 & k_{3 \to 4} & -k_{4 \to 1} \end{array} \right] \left[ \begin{array}{c} x_1 \\ x_2 x_3 \\ x_1 x_4 \\ x_4 \end{array} \right].
\end{equation}


Alternatively, we represent the abstract graph~\eqref{example10} and the maps $y$ and $\tilde y$ together in one graph,
\begin{equation} \label{exampleXXX}
\begin{tikzcd}
\ovalbox{$\begin{array}{c} 1 \\ \\ \end{array}\Bigg\lvert \begin{array}{c} X_1 + X_2 \\ (X_1) \end{array}$} \arrow[r] & 
\ovalbox{$\begin{array}{c} 2 \\ \\ \end{array} \Bigg\lvert\begin{array}{c} X_2 + X_3 \\ (X_2+X_3) \end{array}$} \arrow[d,xshift=+0.5ex] \\
\ovalbox{$\begin{array}{c} 4 \\ \\ \end{array}\Bigg\lvert \begin{array}{c} X_1 + X_4 \\ (X_4) \end{array}$} \arrow[u] & 
\ovalbox{$\begin{array}{c} 3 \\ \\ \end{array}\Bigg\lvert \begin{array}{c} X_1 + X_4 \\ (X_1+X_4) \end{array}$} \arrow[u,xshift=-0.5ex] \arrow[l]
\end{tikzcd}
\end{equation}
\noindent where at each vertex we put a box with the vertex of the abstract graph (if required) on the left, the stoichiometric complex $y$ at the top, and the kinetic complex $\tilde y$ (in brackets) at the bottom. Note that the network~\eqref{example2} in the introduction is essentially the network~\eqref{exampleXXX} with specific interpretations of the species $X_1, X_2, X_3,$ and $X_4$ and of the edge labels $k_{i \to j}$ for $i \to j \in E$. 
\end{exa}

\subsection{Mass-action systems}

Classical chemical reaction networks and mass-action systems, which have been studied extensively in industrial chemistry and systems biology, can be considered as special cases of Definitions~\ref{def:gcrn} and~\ref{def:gmas}.

\begin{dfn}
\label{crn}
A \emph{chemical reaction network} (CRN) $(G,y)$ is a GCRN $(G,y,\tilde y)$ with $\tilde y = y$ and $y \colon V \mapsto \RR^n$ being injective. A mass-action system (MAS) $(G_k,y)$ is given by a CRN $(G,y)$, where $G=(V,E)$, together with a vector $k \in \RR^E_{>0}$, resulting in the labeled directed graph $G_k$.
\end{dfn}

\noindent The ODE system associated with a MAS is given by
\begin{equation} \label{mas}
\dd{x}{t} = f^{G}_k(x) := \sum_{i \to j \in E} k_{i \to j} \, x^{y(i)} \, (y(j) - y(i)) = Y I_E \, \text{diag}(k) (I^s_E)^T \, x^{Y} = Y A_k^G \, x^{Y}
\end{equation}
where the matrices $Y$, $I_E$, $I^s_E$, and $A_k^G$ are as in~\eqref{gmas_fk}.

For a CRN, the stoichiometric and kinetic-order subspaces coincide (i.e.~$S = \tilde S$), and the stoichiometric and kinetic deficiencies are the same (i.e.~$\delta = \tilde \delta$). In fact, the \emph{deficiency} $\delta = \dim( \ker Y \cap \im I_E) = m - \ell - s$ was introduced first in \cite{F1,H} in the context of complex-balanced mass-action systems \cite{H-J1}. It has been studied extensively since then \cite{Fe2,F2,F3,Sh-F}.

In a CRN, the map $y$ is unique and vertices and complexes are in one-to-one correspondence.
It is typical to write the reaction graph $G$ with the complexes as vertices.

\begin{exa}
Recall the CRN $(G,y)$~\eqref{intro-example} from the introduction, where the species and rate constants $k \in \RR_{>0}^E$ have been relabeled as follows:
\begin{equation}
\label{example237}
      \begin{tikzcd}
X_1 \arrow[r, "k_{1 \to 2}"] & X_3 \\[-0.15in]
X_2 + X_3 \arrow[r,  yshift=+0.5ex,"k_{3 \to 4}"] & X_1 + X_4 \arrow[l,  yshift=-0.5ex,"k_{4 \to 3}"]  \\[-0.15in]
X_4 \arrow[r, "k_{5 \to 6}"] & X_2
      \end{tikzcd}
\end{equation}
The CRN~\eqref{example237} has six vertices in three linkage classes, and is not weakly reversible. It has a stoichiometric subspace of dimension two ($s=2$), and hence its deficiency is one ($\delta = 6 - 3 - 2 = 1$).

The system of ODEs~\eqref{mas} associated with the MAS $(G_k,y)$~\eqref{example237} is
\begin{equation}
\label{ex:example237}
\left\{ \quad
\begin{aligned}
\dot{x}_1 & = -k_{1 \to 2} x_1 + k_{3 \to 4} x_2 x_3 - k_{4 \to 3} x_1 x_4 \\
\dot{x}_2 & = -k_{3 \to 4} x_2 x_3 + k_{4 \to 3} x_1 x_4 + k_{5 \to 6} x_4 \\
\dot{x}_3 & = k_{1 \to 2} x_1 - k_{3 \to 4} x_2 x_3 + k_{4 \to 3} x_1 x_4 \\
\dot{x}_4 & = k_{3 \to 4} x_2 x_3 - k_{4 \to 3} x_1 x_4 - k_{5 \to 6} x_4.
\end{aligned}
\right.
\end{equation}
Notably, after expanding and relabeling the rate constants in the ODE system~\eqref{ex:dyn} arising from the GCRN~\eqref{example10}, the ODE system~\eqref{ex:example237} arising from the CRN~\eqref{example237} coincides with \eqref{ex:dyn}. Results obtained by a structural analysis of the GCRN~\eqref{example10} will consequently hold for the CRN~\eqref{example237}. In particular, we will investigate existing methods for corresponding MASs and GMASs with equivalent dynamics,~\eqref{mas} and~\eqref{gmas}, respectively, in Section~\ref{sec:translation}. 
\end{exa}

\subsection{Effective and phantom edges and parametrized sets of equilibria}

For a GCRN, only edges $i \to j \in E$ with $y(j) \neq y(i)$ contribute to the right-hand side of the ODE~\eqref{gmas}. In Example~\ref{example3}, $y(3)=y(4)$, and hence the rate constant $k_{3 \to 4}$ does not appear in the ODEs~\eqref{ex:dyn}, even though $3 \to 4 \in E$. Consequently, we may partition the set of edges $E$ into the set of {\em effective edges}
\[E^* = \{ i \to j \in E \mid y(i) \neq y(j) \}\]
and the set of {\em phantom edges}
\[E^0 = \{ i \to j \in E \mid y(i) = y(j) \}.\]
Obviously, $E^* \cap E^0 = \emptyset$ and $E = E^* \cup E^0$. 
For a vector $k \in \RR^E_{>0}$,
we define $k^* = k_{E^*} \in \RR^{E^*}_{>0}$ and $k^0 = k_{E^0} \in \RR^{E^0}_{>0}$ so that $k=(k^*,k^0)$. Further, we introduce the effective graph $G^* = (V,E^*)$.

From~\eqref{gmas} it follows that
\begin{equation}
\label{gmas_simplified}
f^{G}_k(x) = \sum_{i \to j \in E} k_{i \to j} \, x^{\tilde y(i)} \, (y(j) - y(i)) = \sum_{i \to j \in E^*} k_{i \to j} \, x^{\tilde y(i)} \, (y(j) - y(i)) = f^{G^*}_{k^*}(x) .
\end{equation}
That is, the GMAS $(G_k,y,\tilde y)$ gives rise to the same system of ODEs as the GMAS $(G^*_k,y,\tilde y)$, involving the effective graph $G^*$. In particular, the dynamics does not depend on $k^0$.
From~\eqref{gmas_simplified} and~\eqref{gmas_fk} it follow that 
\begin{equation}
\label{gmas_s}
f^{G}_k(x) = f^{G}_{(k^*,k^0)}(x) = f^{G}_{(k^*,\fp)}(x) = Y A^{G}_{(k^*,\fp)} \, x^{\tilde Y} ,
\end{equation}
for arbitrary $\fp \in \RR^{E^0}_{>0}$.
That is, we may replace the rate constants~$k^0$ by arbitrary parameters~$\fp$.

For a GMAS $(G_k,y,\tilde{y})$,
the set of positive equilibria is given by
\[
\eq^G_{k} := \{ x \in \RR^n_{>0} \mid f^{G}_{k}(x) = 0 \} ,
\]
while the set of positive {\em complex-balanced} equilibria (CBE) is given by
\[
Z^G_k := \{ x \in \RR^n_{>0} \mid A^G_k \, x^{\tilde Y} = 0 \} \subseteq \eq^G_{k} .
\]
Note that $\eq^G_k = \eq^{G^*}_{k^*}$, and hence the equilibrium set $\eq^G_k$ depends on $k^*$, but not on $k^0$, while $Z_k$ depends on both $k^*$ and $k^0$. 

Equation~\eqref{gmas_s} motivates another definition. For an arbitrary parameter $\fp \in \RR^{E^0}_{>0}$, we consider
\[
Z^G_{(k^*,\fp)} := \{ x \in \RR^n_{>0} \mid A^G_{(k^*,\fp)} \, x^{\tilde Y} = 0 \} \subseteq \eq^G_k ,
\]
which is the set of positive CBE of the GMAS $(G_{(k^*,\fp)},y,\tilde{y})$.
The {\em parameterized} set of positive CBE (PCBE) is given by
\[
\bar Z^G_k := \bigcup_{\fp \in \RR^{E^0}_{>0}} Z^G_{(k^*,\fp)} \subseteq \eq^G_{k} ,
\]
thereby varying over all $\fp \in \RR^{E^0}_{>0}$.

For a GMAS $(G_k,y,\tilde y)$, the set $\bar Z^G_k$ need not coincide with the set $\eq^G_{k}$. In our main results, however, we give conditions on the underlying GCRN $(G,y,\tilde y)$ such that $\eq^G_k = \bar Z^G_k$ (Theorem~\ref{main}), and also conditions under which a positive parametrization of $\bar Z^G_k$ can be constructed (Theorem~\ref{main2}). 

\begin{exa} 
\label{example4}
Recall the GCRN~\eqref{example10} from Example~\ref{example3}. 
The edge set $E$ can be partitioned into effective edges $E^* = \{ 1 \to 2, \, 2 \to 3, \, 3 \to 2, \, 4 \to 1 \}$ and phantom edges $E^0 = \{ 3 \to 4 \}$. The equilibrium set $\eq_{k}$ is determined by setting the right-hand sides of the ODEs~\eqref{ex:dyn} to zero, whereas the set $Z_k$ of CBE is determined by the Laplacian matrix,
\begin{equation}
\label{ex:cb}
A_k^Gx^{\tilde Y} = \left[ \begin{array}{cccc} -k_{1 \to 2} & 0 & 0 & k_{4 \to 1} \\ k_{1 \to 2} & -k_{2 \to 3} & k_{3 \to 2} & 0 \\ 0 & k_{2 \to 3} & -k_{3 \to 2}-k_{3 \to 4} & 0 \\ 0 & 0 & k_{3 \to 4} & -k_{4 \to 1} \end{array} \right] \left[ \begin{array}{c} x_1 \\ x_2 x_3 \\ x_1 x_4 \\ x_4 \end{array} \right] = \left[ \begin{array}{c} 0 \\ 0 \\ 0 \\ 0 \end{array} \right].
\end{equation}
Note that these equations depend on the rate constant $k_{3 \to 4}$, even though it does not appear in the ODEs~\eqref{ex:dyn}. By replacing $k_{3 \to 4}$ with an arbitrary parameter $\fp$ in~\eqref{ex:cb}, we obtain the new set of CBE $Z_{(k^*,\fp)}$. The set $\bar Z_{k}$ of PCBE is obtained by varying over all $\fp \in \RR_{>0}$. A constructive method for solving systems like \eqref{ex:cb} for the concentrations $x_i$ will be discussed in Section \ref{sec:kdzt}.
\end{exa}

\subsection{Condensed networks and effective deficiency}

We now consider auxiliary networks with special properties. First, we introduce a network that condenses stoichiometrically identical vertices and thereby removes phantom edges.

\begin{dfn}
\label{def:condensed}
For the GCRN $(G,y,\tilde y)$, we define the {\em condensed} CRN $(G',y')$ given by the digraph $G' = (V', E')$, where
\begin{enumerate}
\item
$V' = \{ [i] \mid i \in V \}$ with $[i] = \{ j \in V \mid y(j)=y(i) \}$ for $i \in V$ and
\item
$E' = \{ [i] \to [j] \mid i \to j \in E^* \}$,
\end{enumerate}
and the map $y' \colon V' \to \RR^n, \, y'([i]) = y(i)$. 
\end{dfn}

For the GCRN $(G,y,\tilde y)$,
we define the {\em effective} deficiency as the deficiency of the condensed CRN $(G',y')$,
\[
\delta' = \dim(\ker Y' \cap \im I_{E'})
\]
with the incidence matrix $I_{E'} \in \RR^{V' \times E'}$ and the matrix of complexes $Y' \in \RR^{n \times V'}$, as defined after \eqref{gmas_fk} in Section~\ref{sec:gcrn}.
Equivalently, $\delta' = m' - \ell' - s$, where $m' = |V'|$ is the number of vertices and $\ell'$ is the number of linkage classes of $G'$. Thereby, we use $S' = \im (Y' I_{E'}) = \im (Y I_E) = S$
and hence $s' = \dim(S') = \dim(S) = s$.

Finally, we define a section $\rho \colon V' \to V$, assigning to each equivalence class $[i] \in V'$ a representative vertex $\rho([i]) \in [i]$,
that is, we define
a set of representative vertices $V^* = \{ \rho([i]) \mid [i] \in V' \} \subseteq V$, containing exactly one representative vertex from each equivalence class.


\begin{exa}
\label{ex:1}
Recall the GCRN~\eqref{example10} from Examples~\ref{example3} and~\ref{example4}, in particular, that $y(3)=y(4)=X_1+X_4$. Hence we have the equivalence classes
\[
[1] = \{1\}, \; [2] = \{2\}, \; [3] = [4] = \{3,4\}.
\]
For the GCRN, we obtain the condensed CRN $(G',y')$, in particular, the graph~$G'$
\begin{equation} 
\label{condensed}
\begin{tikzcd}
\{1\} \arrow[r] & \{2\} \arrow[d,xshift=0.5ex] \\
& \{3,4\} \arrow[u,xshift=-0.5ex] \arrow[lu]
\end{tikzcd}
\end{equation}
and the map $y'$
with $y'(\{1\}) = X_1$, $y'(\{2\}) = X_2 + X_3$, and $y'(\{3,4\}) = X_1 + X_4$. Note that we do not associate kinetic complexes to the vertices of the condensed graph. The deficiency of~\eqref{condensed} is $\delta = 3-1-2=0$, that is, the effective deficiency of the GCRN~\eqref{example10} is $\delta' = 0$.
\end{exa}

\subsection{\vd networks}

Second, we introduce a class of GCRNs which is helpful for constructing a positive para\-metri\-zation of the equilibrium set. 

\begin{dfn}
\label{def:vstar}
Let $(G,y,\tilde y)$ be a GCRN with $G=(V,E)$ and condensed CRN $G'=(V',E')$.
Further, let $V^* \subseteq V$ be a set of representative vertices. (That is, there is a section $\rho \colon V' \to V$ such that $V^* = \{ \rho([i]) \mid [i] \in V' \}$.)
We say that $(G,y,\tilde y)$ is \emph{\vd} if
\[
j \to i \in E^*
\quad \text{implies} \quad 
i \in V^*, 
\quad \text{that is,} \quad 
i = \rho([i]),
\]
and
\begin{align*}
E^0 
= \{ i \to j \mid i \in V^* , \, j \in [i]  \setminus \{i\} \} ,
\quad \text{that is,} \quad
E^0 = \{ \rho([i]) \to j \mid [i] \in V', \, j \in [i] \setminus \{\rho([i])\} \} .
\end{align*}
\end{dfn}




A GCRN being \vd guarantees that effective edges (those between equivalence classes $[i]$) enter at the representative vertex $\rho([i]) \in V^*$, and that phantom edges (those within an equivalence class $[i]$) lead from $\rho([i])$ to the other vertices in the class. The representative vertices $\rho([i]) \in V^*$ may be thought of as the hubs of the representative equivalence classes through which all directed paths must travel.

The class of \vd GCRNs may seem restrictive. The following result, however, guarantees that, for every GMAS, there is a dynamically equivalent GMAS which is \vd, that is, the associated ODEs agree, cf. \eqref{gmas}. This will be instrumental in applications, cf.~Section~\ref{sec:applications}.



\begin{lem}
\label{lemma:equiv2}
Let $(G,y,\tilde y)$ be a GCRN with $G=(V,E)$ and
representative vertex set $V^* \subseteq V$, 
and let $k \in \RR_{> 0}^{E}$ be a rate vector. 
Then there is a GCRN $(\rd G,y,\tilde y)$ with $\rd G=(V, \rd E)$ that is \vd and a rate vector $\rd k \in \RR_{>0}^{\rd E}$ such that 
the GMASs $(G_k,y,\tilde y)$ and $(\rd G_{\rd k},y,\tilde y)$ are dynamically equivalent,
that is, the associated ODEs agree, cf.~\eqref{gmas}. 
\end{lem}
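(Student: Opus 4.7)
The plan is to build $\rd G$ by redirecting every effective edge of $E^*$ onto the representative of its target class, and by replacing all phantom edges with the canonical set prescribed by Definition~\ref{def:vstar}. Concretely, for each $j \to i \in E^*$ I would introduce a new edge $j \to \rho([i])$ in $\rd E^*$, with rate constant
\begin{equation*}
\rd k_{j \to \rho([i])} \; = \sum_{\substack{i' \in [i] \\ j \to i' \in E^*}} k_{j \to i'} \; > \; 0,
\end{equation*}
which is well-defined and positive because several original edges out of $j$ whose targets lie in the same class $[i]$ collapse to the single new edge $j \to \rho([i])$. For the phantom part, I would set $\rd E^0 = \{\rho([i]) \to j : [i] \in V',\, j \in [i] \setminus \{\rho([i])\}\}$ with arbitrary positive labels $\rd k_{\rho([i]) \to j} > 0$. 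If $\rho([i])$ fails to be a source of $G$, I extend $\tilde y$ to $\rho([i])$ by any element of $\RR^n$; this choice does not affect the dynamics because the edges in question are phantom and do not appear in~\eqref{gmas_simplified}.

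That $(\rd G, y, \tilde y)$ is \vd is then immediate from the construction: every edge in $\rd E^*$ has its target $\rho([i]) \in V^*$, and $\rd E^0$ is by design the prescribed set of phantom edges. A quick sanity check confirms that no self-loops are created: $j \to i \in E^*$ forces $y(j) \neq y(i)$, hence $j \notin [i]$ and $\rho([i]) \neq j$, while the new phantom edges $\rho([i]) \to j$ are nonloops because $j \in [i] \setminus \{\rho([i])\}$.

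The main step is verifying dynamical equivalence of $(G_k,y,\tilde y)$ and $(\rd G_{\rd k},y,\tilde y)$. For this I would invoke the simplified form \eqref{gmas_simplified}, in which only effective edges contribute, and exploit the key identity $y(\rho([i])) = y(i')$ for every $i' \in [i]$. Summing over $\rd E^*$, substituting the definition of $\rd k_{j \to \rho([i])}$, and re-indexing the resulting double sum over pairs $(j,i')$ with $j \to i' \in E^*$ recovers exactly $f^G_k(x)$. Conceptually the construction is transparent because the ODE contribution of an edge $j \to i$ depends only on the reaction vector $y(i) - y(j)$ and on the source kinetic complex $\tilde y(j)$, both of which are preserved under re-targeting within an equivalence class. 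The only bookkeeping subtlety---and the one point I expect to warrant care---is correctly aggregating rate constants when several original effective edges collapse onto a single new edge $j \to \rho([i])$; positivity of $\rd k$ on $\rd E^*$ is then automatic from positivity of $k$.
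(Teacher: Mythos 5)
Your proposal is correct and follows essentially the same construction as the paper: redirect each effective edge to the representative of its target class, aggregate the rate constants of edges that collapse, and install the canonical phantom edges with arbitrary positive labels. The only cosmetic difference is that you merge the paper's two cases ($j \in V^*$ versus $j \notin V^*$) into a single summation formula, and you additionally record the minor points (no self-loops, extending $\tilde y$ to new source vertices) that the paper leaves implicit.
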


\begin{proof}
First we define the set $\rd E^0 = \{ i \to j \mid i \in V^* , \, j \in [i] \setminus \{i\} \}$ and associate an arbitrary $\rd k_{i \to j} > 0$ to each edge $i \to j \in \rd E^0$. Then we define the set $\rd E^* = \rd E^*_1 \cup \rd E^*_2$ as follows:
\begin{enumerate}
\item
If $i \to j \in E^*$ and $j \in V^*$, then $i \to j \in \rd E^*_1$ and $\rd k_{i \to j} = k_{i \to j}$.
\item
If $i \to j \in E^*$ and $j \not\in V^*$, then $i \to \rho([j]) \in \rd E^*_2$ and $\displaystyle{\rd k_{i \to \rho([j])} = \sum_{j' \in [j] \setminus \{\rho([j])\}} k_{i \to j'}}.$
\end{enumerate}

Now we consider the GCRN $(\rd G, y, y')$ with $\rd G = (V, \rd E)$ and $\rd E = \rd E^0 \cup \rd E^*$, which is \vd by construction. With the vector $\rd k \in \RR_{> 0}^{\rd E}$ constructed above, we have
\[
\begin{split} 
f^G_k(x) 
& = \sum_{\substack{i \to j \in E^* \\ j \in V^*}} k_{i \to j} \, x^{\tilde y(i)} \, (y(j) - y(i)) + \sum_{\substack{i \to  j' \in E^* \\ j' \not\in V^*}} k_{i \to j'} \, x^{\tilde y(i)} \, (y(j')) - y(i)) \\
& = \sum_{i \to j \in \rd E^*_1} \rd k_{i \to j} \, x^{\tilde y(i)} \, (y(j) - y(i)) + \sum_{i \to \rho([j]) \in \rd E^*_2} \sum_{j' \in [j] \setminus \{\rho([j])\}} k_{i \to j'} \, x^{\tilde y(i)} \, (y(j')) - y(i)) \\
& = \sum_{i \to j \in \rd E^*_1} \rd k_{i \to j} \, x^{\tilde y(i)} \, (y(j) - y(i)) + \sum_{i \to  \rho([j]) \in \rd E^*_2} \rd k_{i \to  \rho([j])} \, x^{\tilde y(i)} \, (y(\rho([j])) - y(i)) \\
&= f^{\rd G}_{\rd k}(x) ,
\end{split}
\]
where we have omitted the edge sets $E^0$ and $\tilde E^0$ according to~\eqref{gmas_simplified}.
\end{proof}

\begin{exa}
Recall the GCRN~\eqref{example10} from Examples~\ref{example3},~\ref{example4} and~\ref{ex:1}. Since $y(3) = y(4)$ and hence $[3]=[4]=\{3,4\}$,
we have two possible sections $\rho$, that is, two possible sets of representative vertices $V^*$, namely, $V_1^* = \{1, 2, 3\}$ and $V_2^* = \{1, 2, 4\}$.

For the set $V_1^*$, all edges in~\eqref{example10} enter $\{3,4\}$ at $3 = \rho(\{3,4\})$, and the phantom edge $3 \to 4$ leads from $3 = \rho(\{3,4\})$ to $4$. Hence \eqref{example10} is $V_1^*$-directed.

For the set $V_2^*= \{1, 2, 4\}$, the edge $2 \to 3$ in~\eqref{example10} leads to $3 \neq \rho(\{3,4\})$. We therefore replace it by the edge $2 \to 4$ with $4 = \rho(\{3,4\})$. Further, we replace the phantom edge $3 \to 4$ by the phantom edge $4 \to 3$. This construction yields the following $V_2^*$-directed GCRN $(\rd G,y, \tilde y)$:
\begin{equation}
\label{example32}
\begin{tikzcd}
\ovalbox{$\begin{array}{c} 1 \\ \\ \end{array}\Bigg\lvert \begin{array}{c} X_1 + X_2 \\ (X_1) \end{array}$} \arrow[r] & \ovalbox{$\begin{array}{c} 2 \\ \\ \end{array} \Bigg\lvert\begin{array}{c} X_2 + X_3 \\ (X_2+X_3) \end{array}$} \arrow[dl] \\
\ovalbox{$\begin{array}{c} 4 \\ \\ \end{array}\Bigg\lvert \begin{array}{c} X_1 + X_4 \\ (X_4) \end{array}$} \arrow[u]  \arrow[r]& \ovalbox{$\begin{array}{c} 3 \\ \\ \end{array}\Bigg\lvert \begin{array}{c} X_1 + X_4 \\ (X_1+X_4) \end{array}$}  \arrow[u]
\end{tikzcd}
\end{equation}
The corresponding rate vector $\rd k \in \RR_{> 0}^{\tilde E}$ is $\rd k_{1 \to 2} = k_{1 \to 2}$, $\rd k_{2 \to 4} = k_{2 \to 3}$, $\rd k_{3 \to 2} = k_{3 \to 2}$, $\rd k_{4 \to 1} = k_{4 \to 1}$, and $\rd k_{4 \to 3} = k_{3 \to 4}$. Hence $f_k^G = f_{\rd k}^{\rd G}$, cf.~\eqref{gmas}.
\end{exa}


\section{Main results}
\label{sec:results}


In Section~\ref{sec:edzt}, we consider GCRNs with an effective deficiency of zero ($\delta' = 0$)
and present Theorem~\ref{main}, stating that the set of positive equilibria coincides with the parametrized set of complex-balanced equilibria (PCBE).
In Section~\ref{sec:kdzt}, we consider GCRNs with a kinetic deficiency of zero ($\tilde \delta = 0$) and higher ($\tilde \delta > 0$) and present Theorem~\ref{main2}, explicitly constructing the PCBE. 

\subsection{Effective deficiency}
\label{sec:edzt}

Lemma~\ref{cones} below is crucial for the proof of Theorem~\ref{main}. In the following, we write $\cone W \subseteq \RR^n$ for the polyhedral cone generated by the columns of the matrix $W \in \RR^{n \times m}$.

\begin{lem} \label{cones}
Let $(G,y,\tilde y)$ be a GCRN 
with $G=(V,E)$ and representative vertex set $V^* \subseteq V$.
In particular, let $(G,y,\tilde y)$ be \vd
and have effective deficiency $\delta'=0$. 
Then
\[ 
\ker Y \cap \cone I_{E^*} \subseteq \cone (-I_{E^0}).
\]
Moreover,
\[ 
\ker Y \cap \relint(\cone I_{E^*}) \subseteq \relint(\cone (-I_{E^0})) .
\]
\end{lem}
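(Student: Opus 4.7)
The natural approach is to reduce the problem to the condensed network $G' = (V', E')$ via the projection $\pi \colon \RR^V \to \RR^{V'}$ sending $e_i \mapsto e_{[i]}$. This projection has two critical features: $Y = Y' \pi$, since $y$ is constant on equivalence classes; and for any effective edge $i \to j \in E^*$ one has $[i] \neq [j]$, so that $\pi(e_j - e_i) = e_{[j]} - e_{[i]}$ is the column of $I_{E'}$ for the induced edge $[i] \to [j]$, while phantom edges lie in $\ker \pi$.

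Starting with $v \in \ker Y \cap \cone I_{E^*}$, I would write $v = \sum_{i \to j \in E^*} \alpha_{ij}(e_j - e_i)$ with $\alpha_{ij} \geq 0$ and push it through $\pi$. The image $\pi(v)$ is a nonnegative combination of columns of $I_{E'}$, and $Y' \pi(v) = Y v = 0$ places it in $\ker Y' \cap \im I_{E'}$. The effective-deficiency hypothesis $\delta' = 0$ collapses this intersection to $\{0\}$, so $\pi(v) = 0$; equivalently, $\sum_{i \in [k]} v_i = 0$ for every equivalence class $[k] \in V'$.

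Next I would invoke the \vd property. Since no effective edge targets a vertex outside $V^*$, for any $i \notin V^*$ the expansion gives $v_i = -\sum_{i \to j \in E^*} \alpha_{ij} \leq 0$. Combined with the class-sum being zero, this forces $v_{\rho([k])} = \sum_{i \in [k]\setminus\{\rho([k])\}}(-v_i) \geq 0$, and the restriction of $v$ to $[k]$ can be rewritten as
\[
v|_{[k]} = \sum_{i \in [k] \setminus \{\rho([k])\}} (-v_i) \bigl( e_{\rho([k])} - e_i \bigr).
\]
Each summand $e_{\rho([k])} - e_i$ is precisely the column of $-I_{E^0}$ corresponding to the phantom edge $\rho([k]) \to i$, whose existence is guaranteed by the \vd description of $E^0$. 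Summing over $[k]$ then yields $v \in \cone(-I_{E^0})$, proving the first inclusion.

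For the relative-interior refinement, I would observe that the generators $e_{\rho([k])} - e_i$ are linearly independent, since their $-1$ entries sit in disjoint coordinates of $V \setminus V^*$. Hence $\cone(-I_{E^0})$ is simplicial, the representation above is unique, and $v \in \relint(\cone(-I_{E^0}))$ iff every coefficient $-v_i$ is strictly positive. Starting from $v \in \relint(\cone I_{E^*})$, every $\alpha_{ij} > 0$, and the identity $-v_i = \sum_{i \to j \in E^*}\alpha_{ij}$ yields strict positivity whenever $i \notin V^*$ emits at least one effective edge. The main obstacle is precisely this caveat: ensuring that each non-representative vertex incident to a phantom edge also emits some effective edge. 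This should follow from the fact that a non-representative vertex with no outgoing effective edge makes its phantom generator dynamically redundant in the GMAS (it contributes trivially to \eqref{gmas_simplified}) and can be removed without altering the dynamics, but this reduction step is where I would expect to spend the most care.
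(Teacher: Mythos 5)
Your argument is essentially the paper's own proof. Aggregating $v$ over the equivalence classes (the paper does this by direct summation rather than through a named projection $\pi$, but the computation $Yv = Y'I_{E'}x' = Y'v'$ with $v'_{[k]}=\sum_{i\in[k]}v_i$ is identical), invoking $\delta'=0$ to force $v'=0$, and then using the \vd structure to write $v|_{[k]}$ as a nonnegative combination of the phantom columns $e_{\rho([k])}-e_{i}$ is exactly the paper's construction; your coefficients $-v_i$ coincide with the paper's choice $\tilde x_{\rho([i])\to i'}=\sum_{i'\to j\in E^*}x_{i'\to j}$ in \eqref{tilde_x}. The first inclusion is therefore complete and correct.

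On the relative-interior statement, the caveat you isolate is genuine, and you should know that the paper's proof does not resolve it either: it simply asserts $\tilde x\in\RR^{E^0}_{>0}$ ``by \eqref{tilde_x}'', which fails precisely when some non-representative vertex $i'$ emits no effective edge, since the defining sum is then empty. Your proposed repair --- declaring the phantom edge $\rho([i'])\to i'$ dynamically redundant and removing it --- cannot rescue the inclusion as literally stated, because $E^0$ is pinned down by the \vd definition and, as you correctly observe, $\cone(-I_{E^0})$ is simplicial, so deleting a generator changes the right-hand side. The statement can in fact fail without an extra hypothesis: take two representative vertices joined by an effective $2$-cycle with distinct stoichiometric complexes, plus one pendant non-representative vertex reached by a single phantom edge; then $\ker Y\cap\relint(\cone I_{E^*})=\{0\}$ while $\relint(\cone(-I_{E^0}))$ is an open ray missing the origin. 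The clean way to close the gap is to add (or note as implicit) the hypothesis under which the lemma is actually applied, namely weak reversibility of $G$: a non-representative vertex then has an incoming phantom edge, hence lies in a nontrivial strong component, hence has an outgoing edge, and that edge must be effective because phantom edges originate only at representative vertices --- so the empty-sum case cannot occur. With that observation your argument, like the paper's, goes through.
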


\begin{proof}
Let $v \in (\ker Y \cap \cone I_{E^*}) \subseteq \RR^V$, that is, $v = I_{E^*} \, x = \sum_{i \to j \in E^*} x_{i \to j} \, (e_j - e_i)$ with $x \in \RR^{E^*}_{\ge0}$ (nonnegative weights on the effective edges $E^*$) and
\begin{align*}
0 = Y v 
& = \sum_{i \to j \in E^*} x_{i \to j} \, (y(j) - y(i)) \\ & = \sum_{[i] \to [j] \in E'} \bigg( \sum_{\substack{i' \to j' \in  E^*: \\ i' \in [i], j' \in [j]}} x_{i' \to j'} \bigg) \, (y'([j]) - y'([i])) \\
&= Y' \sum_{[i] \to [j] \in E'} x'_{[i] \to [j]} \,(e_{[j]} - e_{[i]}) \\
&= Y' \, I_{E'} \, x' = Y' \, v' .
\end{align*} 
Thereby, $(G',y')$ with $G'=(V',E')$ is the corresponding condensed CRN
and $v' = I_{E'} \, x' \in \RR^{V'}$ with $x' \in \RR^{E'}_{\ge0}$.
Clearly,
$
v'_{[i]} = \sum_{i' \in [i]} v_{i'}
$
for $[i] \in V'$.

Now, $\delta'=\dim(\ker Y' \cap \im I_{E'})=0$ implies $v'=0$, that is,
\begin{align*}
0 = v'_{[i]} 
= \sum_{i' \in [i]} v_{i'}
\end{align*}
for $[i] \in V'$.
Using that $G$ is \vd, 
reconsider $v = I_{E^*} \, x \in \RR^V$ (the fluxes arising from the effective edges $E^*$). 
Let $i \in V^*$, that is, $i = \rho([i])$.
For $i' \in [i] \setminus \{i\}$,
\[
v_{i'} = - \sum_{i' \to j \in E^*} x_{i' \to j} ,
\]
whereas
\[
v_{i} = - \sum_{i' \in [i] \setminus \{i\}} v_{i'} .
\]
%
Now, choose $\tilde x \in \RR^{E^0}_{\ge0}$ (nonnegative weights on the phantom edges $E^0$) as
\begin{equation} \label{tilde_x}
\tilde x_{i \to i'} = \sum_{i' \to j \in E^*} x_{i' \to j} ,
\end{equation}
where $i' \in [i]$.
Then,
for $i' \in [i] \setminus \{i\}$,
\[
v_{i'} = - \tilde x_{i \to i'} ,
\]
whereas
\[
v_{i} = - \sum_{i' \in [i] \setminus \{i\}} v_{i'} = \sum_{i'\in[i] \setminus \{i\}} \tilde x_{i \to i'} = \sum_{i \to i' \in E^0} \tilde x_{i \to i'} .
\]
That is,
$- v = I_{E^0} \, \tilde x \in \RR^V$ (the fluxes arising from the phantom edges $E^0$),
and hence $v \in \cone (-I_{E^0})$.



Finally, let $v \in (\ker Y \cap \relint(\cone I_{E^*}))$, that is,
$v = I_{E^*} \, x$ for some $x \in \RR^{E^*}_{>0}$.
Then $v = -I_{E^0} \, \tilde x\in \relint(\cone(-I_{E^0}))$ with $\tilde x \in \RR^{E^0}_{>0}$ by~\eqref{tilde_x}.
\end{proof}

We now present the main result of this section, which gives conditions under which the equilibrium set $X_k^G$ coincides with the parametrized set of complex-balanced equilibria $\bar{Z}_k^G$.

\begin{thm} \label{main}
Let $(G,y,\tilde y)$ be a GCRN with effective deficiency $\delta'=0$. 
Further, let $(G,y,\tilde y)$ be \vd for a set of representative vertices $V^* \subseteq V$. 
Then, for the GMAS $(G_k,y,\tilde y)$, 
the set of positive equilibria agrees with the parametrized set of complex-balanced equilibria,
that is, $\eq^G_{k} = \bar Z^G_k$.
\end{thm}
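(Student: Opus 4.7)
The inclusion $\bar Z^G_k \subseteq X^G_k$ is immediate from the definitions, so the plan is to establish the reverse inclusion $X^G_k \subseteq \bar Z^G_k$ by a flux-balancing argument that leverages Lemma~\ref{cones}.

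Fix $x \in X^G_k$, so that $f^G_k(x)=0$. By~\eqref{gmas_simplified}, only effective edges contribute, and I can write the effective flux as $v = I_{E^*}\,w \in \RR^V$, where $w \in \RR^{E^*}_{>0}$ has components $w_{i\to j} = k_{i\to j}\, x^{\tilde y(i)} > 0$, obtained from $\mathrm{diag}(k^*)(I^s_{E^*})^T x^{\tilde Y}$. In particular $v \in \relint(\cone I_{E^*})$, and the equilibrium condition $Y v = f^G_k(x) = 0$ says $v \in \ker Y$. Thus $v \in \ker Y \cap \relint(\cone I_{E^*})$.

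Now I would apply the ``moreover'' part of Lemma~\ref{cones}: since $(G,y,\tilde y)$ is $V^*$-directed with $\delta'=0$, I obtain $v \in \relint(\cone(-I_{E^0}))$, i.e.\ there exists $\tilde w \in \RR^{E^0}_{>0}$ with
\[
-v = I_{E^0}\, \tilde w .
\]
The plan is then to realize $\tilde w$ as the monomial flux induced by $x$ under some positive rate vector on the phantom edges. Concretely, I define $\fp \in \RR^{E^0}_{>0}$ by
\[
\fp_{i\to j} \;=\; \frac{\tilde w_{i\to j}}{x^{\tilde y(i)}} \quad\text{for each } i \to j \in E^0,
\]
which is positive because $\tilde w > 0$ and $x \in \RR^n_{>0}$. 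Then $\mathrm{diag}(\fp)(I^s_{E^0})^T x^{\tilde Y} = \tilde w$, so the phantom contribution to $A^G_{(k^*,\fp)}\, x^{\tilde Y}$ is exactly $I_{E^0}\, \tilde w = -v$. Combined with the effective contribution $v$, this gives
\[
A^G_{(k^*,\fp)}\, x^{\tilde Y} \;=\; v + I_{E^0}\tilde w \;=\; 0,
\]
so $x \in Z^G_{(k^*,\fp)} \subseteq \bar Z^G_k$, as required.

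The only nontrivial step is the passage from $f^G_k(x)=0$ to membership of $v$ in $\relint(\cone(-I_{E^0}))$, but Lemma~\ref{cones} delivers this directly; once that lemma is in hand, the remaining bookkeeping (matching the flux $\tilde w$ to the parameter $\fp$ via the monomials $x^{\tilde y(i)}$) is routine. The $V^*$-directedness hypothesis is used only implicitly, through the invocation of Lemma~\ref{cones}.
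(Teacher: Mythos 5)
Your proposal is correct and follows essentially the same route as the paper's proof: identify the effective flux $A^{G^*}_{k^*}x^{\tilde Y} = I_{E^*}w$ as an element of $\ker Y \cap \relint(\cone I_{E^*})$, invoke the ``moreover'' part of Lemma~\ref{cones} to write it as $-I_{E^0}\tilde w$ with $\tilde w>0$, and set $\fp_{i\to j}=\tilde w_{i\to j}/x^{\tilde y(i)}$ to cancel it with the phantom contribution. The paper's $\alpha$ is exactly your $\tilde w$, so the two arguments coincide step for step.
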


\begin{proof}
Let $x \in \RR_{>0}^n$ be a positive equilibrium, that is, $x \in \eq^G_k$. 
Using $G^*=(V,E^*)$, $G^0=(V,E^0)$, and $\fp \in \RR^{E^0}_{>0}$,
we may write
\[
A^G_{(k^*,\fp)} \, x^{\tilde Y} = A^{G^*}_{k^{\ast}} \, x^{\tilde Y} + A^{G^0}_\fp x^{\tilde Y} ,
\]
cf.~\eqref{gmas}.
Now $x \in \eq^G_k = \eq^{G^*}_{k^*}$ implies $Y A^{G^*}_{k^*} x^{\tilde Y} = 0$ and hence
\[
A^{G^*}_{k^*} \, x^{\tilde Y} \in (\ker Y \cap \relint(\cone I_{E^{\ast}})) ,
\]
cf.~\eqref{gmas_fk}. 
Since $\delta' = 0$ and $(G,y,\tilde y)$ is \vd, we have $
A^{G^*}_{k^{\ast}} \, x^{\tilde Y} \in \relint(\cone(-I_{E^0})) ,
$
by Lemma~\ref{cones}.
That is,
\[
A^{G^*}_{k^{\ast}} \, x^{\tilde Y} = - \sum_{i \to j \in E^0} \alpha_{i \to j} \, (e_j - e_i)
\]
for some $\alpha \in \RR^{E^0}_{>0}$. 
On the other hand,
\[
A^{G^0}_\fp x^{\tilde Y} =  \sum_{i \to j \in E^0} \fp_{i \to j} \, x^{\tilde y(i)} \, (e_j - e_{i}).
\]
We choose $\fp_{i \to j} = \alpha_{i \to j} / x^{\tilde y(i)}$ for $i \to j \in E^0$ such that $A^{G^*}_{k^*} x^{\tilde Y} = - A^{G^0}_{\fp} x^{\tilde Y}$ and hence $A^G_{(k^*,\fp)} \, x^{\tilde Y} = 0$,
that is, $x \in Z^G_{(k^*,\fp)} \subseteq \bar Z^G_k$ so $\eq^G_{k} \subseteq \bar Z^G_k$. Since $\bar Z^G_k \subseteq \eq^G_{k}$ trivially, we have $\eq^G_{k} = \bar Z^G_k$.
\end{proof}

\subsection{Kinetic deficiency}
\label{sec:kdzt}

We fix the graph $G=(V,E)$ and omit the corresponding superscript,
that is, we write $A_k^G = A_k$, $Z^G_k = Z_k$, and $\bar Z_k^G = \bar Z_k$.
Recall that $x \in Z_k$ is equivalent to $x^{\tilde Y} \in \ker A_{k}$. Following \cite{J2014,MR2014}, we discuss $\ker A_k$. First, we introduce the vector of {\em tree constants} $K \in \RR^V_{>0}$ with entries 
\[
K_i = \sum_{(\mathcal V, \mathcal E) \in T_i} \prod_{i' \to j' \in \mathcal E} k_{i' \to j'} , \quad i \in V ,
\]
where $T_i$ is the set of directed spanning trees (of the respective linkage class) rooted at vertex~$i$. Clearly, the tree constants $K$ depend on the rate constants $k \in \RR^E_{>0}$, that is, $K = K(k)$.

For a weakly reversible GCRN,
\[
\ker A_k = \Span \{ v^1, \ldots, v^{\ell} \} 
\]
with nonnegative vectors $v^l \in \RR_{\geq 0}^n$ (for $l = 1, \ldots, \ell$) having support on the respective linkage class~$l$.
In particular, $v^l_i = K_i$ if vertex $i$ is in linkage class $l$ and $v^l_i = 0$ otherwise. Now, $x^{\tilde Y} \in \ker A_k$ if and only if
\[
x^{\tilde Y} = \sum_{l=1}^\ell \alpha_l \, v^l 
\]
with $\alpha_l > 0$. For any pair of vertices $i$ and $j$ in the same linkage class, we have
\[
\frac{x^{\tilde y(i)}}{K_{i}} = \frac{x^{\tilde y(j)}}{K_{j}}.
\]
Taking the logarithm gives
\begin{equation} \label{eq:loglin}
(\tilde y(i) - \tilde y(j))^T \ln x = \ln \left( \frac{K_{i}}{K_{j}} \right).
\end{equation}

Now we choose a spanning forest $F = (V,\mathcal E)$ for $G = (V,E)$, that is,
we choose spanning trees for all linkage classes. Note that $F$ contains the same vertices as $G$, but not the same edges. Also note that, in the following results and applications, the choice of the spanning tree is arbitrary.
Clearly, the spanning tree of linkage class~$l$ contains $m_l$ vertices and $m_l - 1$ edges. Hence, the spanning forest $F$ contains $m$ vertices and $m-\ell$ edges. We introduce the matrix $M = \tilde Y I_\mathcal{E} \in \RR^{n \times \mathcal E}$ whose $m-\ell$ columns are given by $\tilde y(j) - \tilde y(i)$ for $i \to j \in \mathcal E$. Correspondingly, we define the vector $\tc \in \RR^\mathcal{E}_{>0}$ whose $m-\ell$ entries are given by $\tc_{i \to j} = \frac{K_i}{K_j}$ for $i \to j \in \mathcal E$. As for $K$, we note that $\tc$ depends on $k$, that is, $\tc=\tc(k)$. 
Hence, we can write the system of equations~\eqref{eq:loglin} as
\begin{equation} \label{loglinear}
M^T \ln x = \ln \tc .
\end{equation}

Theorem~1 in \cite{MR2014} implies the following result.

\begin{thm} \label{main2}
Let $(G,y,\tilde y)$ be a GCRN that is weakly reversible, 
and let $(G_k,y,\tilde y)$ be a GMAS. Further, let $M \in \RR^{n \times \mathcal{E}}$ and $\tc=\tc(k)=\tc(k^*,k^0) \in \RR^\mathcal{E}_{>0}$ be defined as above, and let $H \in \RR^{n \times \mathcal{E}}$ be a generalized inverse of $M^T$ (that is, $M^T H M^T = M^T$). Finally, define $B \in \RR^{n \times (n - \tilde{s})}$ with $\im B = \ker M^T$ and $\ker B = \{0\}$, and $C \in \RR^{\mathcal{E} \times \tilde \delta}$ with $\im C = \ker M$ and $\ker C = \{0\}$. 
\begin{enumerate}
\item
If the kinetic deficiency is zero ($\tilde \delta = 0$), then $\bar Z_{k} \not= \emptyset$, in particular, $\bar Z_{k}$ has the positive parametrization
\begin{equation}
\label{eq:param}
\bar Z_{k} = \left\{ \tc(k^*,\fp)^{H^T} \circ \fpp^{B^T} \mid \fp \in \RR^{E^0}_{>0}, \, \fpp \in \RR^{n - \tilde s}_{>0} \right\}.
\end{equation}
\item
If the kinetic deficiency is positive ($\tilde \delta > 0$)
and the $\tilde \delta$ equations
\begin{equation}
\label{eq:condition}
\tc(k^*,k^0)^{C} = 1^{\tilde \delta \times 1}
\end{equation}
can be solved explicitly for $\tilde \delta$ components of $k^0 \in \RR^{E^0}_{>0}$ (in terms of $k^* \in \RR^{E^*}_{>0}$ and the remaining components of $k^0$),
that is,
if there exists an explicit function $h \colon \RR^{E^* \cup (E^0 \setminus \tilde E^0)}_{>0} \to \RR^{\tilde E^0}_{>0}$ with $\tilde E^0 \subseteq E^0$, $|\tilde E^0| = \tilde \delta$, and $k^0 = (\tilde k^0, \cdot) \in \RR^{(E^0 \setminus \tilde E^0) \cup \tilde E^0}_{>0}$ such that, for all $k^* \in \RR_{> 0}^{E^*}$ and $\tilde k^0 \in \RR_{> 0}^{E^0 \setminus \tilde E^0}$,
\[
\tc(k^*,(\tilde k^0, h(k^*,\tilde k^0)))^{C} = 1^{\tilde \delta \times 1} ,
\]
then $\bar Z_{k} \neq \emptyset$, and $\bar Z_{k}$ has the positive parametrization
\begin{equation}
\label{eq:param2}
\bar Z_{k} = \left\{ \tc(k^*,(\fp, h(k^*, \fp)))^{H^T} \circ \fpp^{B^T} \mid \fp \in \RR^{E^0 \setminus \tilde E^0}_{>0}, \, \fpp \in \RR^{n - \tilde s}_{>0} \right\}.
\end{equation}
\end{enumerate}
\end{thm}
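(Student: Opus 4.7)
The plan is to reduce the defining condition $x \in \bar Z_k$ to the log-linear system~\eqref{loglinear} (which the discussion preceding the theorem establishes is equivalent to complex balance $A_{(k^*,\fp)} \, x^{\tilde Y} = 0$ for a suitable choice of phantom-edge rates) and then to solve this system using linear algebra together with the properties of $H$, $B$, and $C$. Throughout, I would keep in mind the matrix-power identities $\exp(H \ln \tc) = \tc^{H^T}$ and $\exp(B \ln \fpp) = \fpp^{B^T}$, which follow from the definitions in the introduction and translate the additive solution set of the log-linear system into the multiplicative parametrization appearing in the statement.

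For part~1, I would first observe that $\tilde \delta = \dim \ker M = 0$ makes $M$ of full column rank $|\mathcal E| = m - \ell$, hence $M^T$ is surjective and the system $M^T \ln x = \ln \tc(k^*,\fp)$ is consistent for every $\fp \in \RR^{E^0}_{>0}$. The identity $M^T H M^T = M^T$ then yields that $H \ln \tc(k^*,\fp)$ is a particular solution, since $\ln \tc(k^*,\fp) \in \im M^T$ implies $M^T H \ln \tc(k^*,\fp) = \ln \tc(k^*,\fp)$. The general solution differs from this by an element of $\ker M^T = \im B$, which has dimension $n - \tilde s$. Writing the homogeneous part as $B \ln \fpp$ with $\fpp \in \RR^{n-\tilde s}_{>0}$ and exponentiating yields $x = \tc(k^*,\fp)^{H^T} \circ \fpp^{B^T}$, which is~\eqref{eq:param}; non-emptiness is immediate since any choice of $(\fp,\fpp)$ produces a point of $\bar Z_k$.

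For part~2, the system $M^T \ln x = \ln \tc(k)$ is consistent if and only if $\ln \tc(k) \in \im M^T = (\ker M)^\perp = (\im C)^\perp$, i.e., $C^T \ln \tc(k) = 0$; exponentiating componentwise, this is exactly the multiplicative condition $\tc(k)^C = 1^{\tilde\delta \times 1}$ of~\eqref{eq:condition}. The hypothesis that an explicit $h$ solves these $\tilde \delta$ scalar equations for the $\tilde\delta$ components of $k^0$ indexed by $\tilde E^0$ means that substituting $k^0 = (\tilde k^0, h(k^*,\tilde k^0))$ enforces consistency automatically, while leaving $\tilde k^0 \in \RR^{E^0 \setminus \tilde E^0}_{>0}$ as a free parameter playing the role of $\fp$. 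Applying the solution procedure from part~1 to the now-consistent log-linear system then produces~\eqref{eq:param2}.

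The step I expect to be the main obstacle is verifying that the generalized-inverse formula captures exactly the full positive solution set — both that $H \ln \tc$ is always a valid particular solution when $\ln \tc \in \im M^T$, and that the homogeneous parametrization via $B$ is complete. Both are standard linear-algebra facts, but keeping the matrix-power notation aligned with the linear-algebraic arguments (in particular, distinguishing between $C$ acting as the cokernel basis of $M^T$ and $B$ acting as the kernel basis) requires care. Once these are established, the reduction of part~2 to part~1 via the substitution $k^0 = (\tilde k^0, h(k^*,\tilde k^0))$ is essentially bookkeeping.
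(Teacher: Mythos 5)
Your proposal is correct and follows essentially the same route as the paper's proof: reduce complex balancing to the log-linear system \eqref{loglinear} via the tree constants, use $M^THM^T=M^T$ to exhibit $H\ln\tc$ as a particular solution, parametrize the homogeneous part by $\im B=\ker M^T$, and in part~2 characterize consistency by $C^T\ln\tc=0$ before substituting $h$ and reusing part~1. No substantive differences or gaps.
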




Before we prove statements 1 and 2 of Theorem~\ref{main2}, we make two remarks.
\begin{itemize}
\item If the generalized inverse $H \in \RR^{n \times \mathcal{E}}$ of $M^T$ has integer entries, then~\eqref{eq:param} is a rational parametrization. Common generalized inverses such as the Moore-Penrose inverse, however, rarely have this property \cite{Israel2003}. In applications, we construct $H$ by determining the matrix of elementary row operations $P$ that transforms $M^T$ to reduced row echelon form. That is, we find $P \in \RR^{\mathcal{E} \times \mathcal{E}}$ such that $PM^T \in \RR^{\mathcal{E} \times n}$ is the reduced row echelon form of $M^T$. Then we determine $Q \in \{0,1\}^{n \times \mathcal{E}}$ such that $Q P M^T = I$ and hence $\ln x = H \ln \tc$ with $H = Q P \in \RR^{n \times \mathcal{E}}$. That is, we perform Gaussian elimination on~\eqref{loglinear} and then set all free parameters to zero. 
\item As a special case of statement~2, if $\tilde E^0 = E^0$ and equations \eqref{eq:condition} can be solved explicitly for $k^0$ (in terms of $k^*$), that is, if there exists $h \colon \RR^{E^*}_{>0} \to \RR^{E^0}_{>0}$ such that
\[
\kappa (k^*,h(k^*))^C = 1^{\tilde \delta \times 1},
\]
then we obtain the monomial parametrization
\[
\bar{Z}_k = \left\{ \kappa(k^*,h(k^*))^{H^T} \circ \tau^{B^T} \; | \; \tau \in \RR_{>0}^{n - \tilde s} \right\}.
\]
\end{itemize}

\begin{proof}[Proof of statement 1.]
Since $(G,y,\tilde y)$ is weakly reversible, $x \in Z_k$ if and only if $\ln x$ satisfies~\eqref{loglinear}. Now $\im M = \im (\tilde Y I_\mathcal{E}) = \im (\tilde Y I_E) = \tilde S$ and hence $\rank M = \tilde{s}$. Since the kinetic deficiency is zero, we have $\tilde \delta = m - \ell - \tilde s = 0$ and hence $\tilde s = m - \ell$. That is, $M^T$ has full rank $m-\ell$ and hence $\ln \tc \in \im M^T$ for any $\tc \in \RR_{> 0}^{m - \ell}$.
Equivalently, the linear system \eqref{loglinear} has a solution $\ln x$ for any $\tc \in \RR_{>0}^\mathcal{E}$.  
Following Proposition 3 in \cite{MR2014}, we use the generalized inverse $H \in \RR^{n \times \mathcal{E}}$ of $M^T$ and obtain
\[
M^T H \ln \tc = M^T H M^T \ln x = M^T \ln x = \ln \tc .
\]
That is, $\ln x^*=H \ln \tc$  is a solution of~\eqref{loglinear} and hence $x^* = \tc^{H^T} \in Z_k$. In particular, $Z_k \neq \emptyset$.

For any $x \in Z_k$, 
\[
M^T ( \ln(x) - \ln(x^*)) = 0
\]
and, since $\ker M^T = \im M^\perp = \tilde S^\perp$,
\[
\ln(x) - \ln(x^*) \in \tilde S^{\perp}.
\]
We use $B \in \RR^{n \times (n - \tilde s)}$ with $\im B = \tilde{S}^{\perp}$, $\ker B = \{0\}$ and obtain
\[
\ln(x) - \ln(\tilde x^*) = B \alpha
\]
with $\alpha \in \RR^{n-\tilde s}$ and
\[
x = x^* \circ \fpp^{B^T}
\]
with $\fpp = e^\alpha \in \RR^{n-\tilde s}_{>0}$. Equivalently,
\[
Z_k = \left\{ x^* \circ \fpp^{B^T} \mid \fpp \in \RR^{n-\tilde s}_{>0} \right\} 
= \left\{ \tc(k^*,k^0)^{H^T} \circ \fpp^{B^T} \mid \fpp \in \RR^{n-\tilde s}_{> 0} \right\} .
\]
Note that the matrices $M$, $H$, and $B$ do not depend on $k \in \RR_{>0}^{E}$, whereas $\tc = \tc(k) = \tc(k^*,k^0)$. Finally,
\[
\bar Z_{k} = \bigcup_{\fp \in \RR^{E^0}_{>0}} Z_{(k^*,\fp)} 
= \left\{ \tc(k^*,\fp)^{H^T} \circ \fpp^{B^T} \mid \fp \in \RR^{E^0}_{>0}, \, \fpp \in \RR^{n - \tilde s}_{>0} \right\} .
\]
\end{proof}

\begin{proof}[Proof of statement 2.]
If the kinetic deficiency is positive ($\tilde \delta > 0$), then $M^T$ does not have full rank, and \eqref{loglinear} does not have a solution for all right-hand sides. We use $C \in \RR^{\mathcal{E} \times \tilde \delta}$ with $\im C = \ker M$, $\ker C = \{0\}$ and find that~\eqref{loglinear} has a solution if and only if $\ln \tc \in \im M^T = \ker M^\perp = \im C^\perp = \ker C^T$.
Equivalently,
$C^T \ln \tc = 0$,
that is, 
\[
\tc^C = \tc(k^*,k^0)^C = 1^{\tilde \delta \times 1} .
\]
By assumption, 
these $\tilde \delta$ equations can be solved explicitly for $\tilde \delta$ components of $k^0 \in \RR^{E^0}_{>0}$ (in terms of $k^* \in \RR^{E^*}_{>0}$ and the remaining components of $k^0$),
that is,
there exists an explicit function $h \colon \RR^{E^* \cup (E^0 \setminus \tilde E^0)}_{>0} \to \RR^{\tilde E^0}_{>0}$ with $\tilde E^0 \subseteq E^0$, $|\tilde E^0| = \tilde \delta$, and $k^0 = (\tilde k^0, \cdot) \in \RR^{(E^0 \setminus \tilde E^0) \cup \tilde E^0}_{>0}$ such that, for all $k^* \in \RR_{> 0}^{E^*}$ and $\tilde k^0 \in \RR_{> 0}^{E^0 \setminus \tilde E^0}$,
\[
\tc(k^*,(\tilde k^0, h(k^*,\tilde k^0))^{C} = 1^{\tilde \delta \times 1} .
\]
Hence~\eqref{loglinear} has a solution for any $k^* \in \RR^{E^*}_{>0}$ and $\tilde k^0 \in \RR^{E^0 \setminus \tilde E^0}_{>0}$, and from the proof of statement~1 
we obtain the positive parametrization~\eqref{eq:param2}.
\end{proof}

\section{Applications}
\label{sec:applications}

The process of network translation allows to relate a classical CRN to a GCRN with potentially stronger structural properties \cite{J2014}. 
Using this method, we can apply the main results of this paper, Theorems~\ref{main} and~\ref{main2}, to a broad class of mass-action systems studied in the biochemical literature. 

\subsection{Translated chemical reaction networks}
\label{sec:translation}

The following definition was introduced in \cite{J2014} in order to relate a MAS to a dynamically equivalent GMAS.

\begin{dfn}
\label{def:translation}
Let $(G,y)$ with $G=(V,E)$ be a CRN. A GCRN $(G^\tr,y^\tr,\tilde y^\tr)$ with $G^\tr=(V^\tr,E^\tr)$ if a \emph{translation} of $(G,y)$ is there exists a map $g \colon E \to E^\tr$ such that $g(i \to j) = i^\tr \to j^\tr$ with $i \to j \in E$ and $i^\tr \to j^\tr \in E^\tr$ implies (i) $y^\tr(j^\tr)-y^\tr(i^\tr) = y(j) - y(i)$ and (ii) $\tilde y^\tr(i^\tr) = y(i)$.
\end{dfn}

In other words, a GCRN is a translation of a given CRN if there is a map between reactions of the two networks which (i) preserves reaction vectors and (ii) relates source complexes in the CRN to kinetic complexes in the GCRN. Definition~\ref{def:translation} is more general than Definition 6 in \cite{J2014}. In that work, GCRNs were defined as in \cite{MR2012} which required $y^\tr$ and $\tilde y^\tr$ to be injective. Here, GCRNs are defined as in \cite{MR2014} which allows $y^\tr$ and $\tilde y^\tr$ to be noninjective. 


\begin{lem}
\label{lemma:equiv1}
Let $(G,y)$ be a CRN, and let $k \in \RR^{E}_{> 0}$ be a rate vector. Further, let the GCRN $(G^\tr, y^\tr, \tilde{y}^\tr)$ be a translation of $(G,y)$, and let $k^\tr \in \RR_{> 0}^{E^\tr}$ be a rate vector with $k^\tr_{i^\tr\to j^\tr} =  k_{i \to j}$ if $g(i \to j) = i^\tr \to j^\tr$. Then the MAS $(G_k,y)$ and the GMAS $(G^\tr_{k^\tr}, y^\tr, \tilde y^\tr)$ are dynamically equivalent, that is, the associated ODEs agree, cf.~\eqref{mas} and~\eqref{gmas}. 
\end{lem}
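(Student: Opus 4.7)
The plan is to verify directly that the right-hand sides of the ODEs~\eqref{mas} and~\eqref{gmas} coincide by reindexing the GMAS sum via the translation map $g \colon E \to E^\tr$. Since dynamical equivalence just means the two associated vector fields agree as functions of $x$, this reduces to an algebraic matching of terms using the three pieces of data provided by Definition~\ref{def:translation} together with the compatibility of rate constants.

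First I would write
\[
f^{G^\tr}_{k^\tr}(x) = \sum_{i^\tr \to j^\tr \in E^\tr} k^\tr_{i^\tr \to j^\tr}\, x^{\tilde y^\tr(i^\tr)}\, (y^\tr(j^\tr) - y^\tr(i^\tr))
\]
and group the terms according to their preimages under $g$. For each $i \to j \in E$ with $g(i \to j) = i^\tr \to j^\tr$, property~(ii) of Definition~\ref{def:translation} gives $\tilde y^\tr(i^\tr) = y(i)$ and hence $x^{\tilde y^\tr(i^\tr)} = x^{y(i)}$, property~(i) gives $y^\tr(j^\tr) - y^\tr(i^\tr) = y(j) - y(i)$, and the hypothesis $k^\tr_{i^\tr \to j^\tr} = k_{i \to j}$ matches the rate constants. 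Substituting, the contribution of $i^\tr \to j^\tr$ to $f^{G^\tr}_{k^\tr}(x)$ becomes exactly $k_{i \to j}\, x^{y(i)}\,(y(j)-y(i))$, which is the contribution of $i \to j$ to $f^G_k(x)$. Summing over $E$ then yields $f^{G^\tr}_{k^\tr}(x) = f^G_k(x)$.

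The only mild subtlety — and really the main obstacle — is the bookkeeping for the map $g$ in the non-bijective case. Any edge $i^\tr \to j^\tr \in E^\tr$ that lies outside the image of $g$ must be a phantom edge, i.e.\ satisfy $y^\tr(j^\tr) = y^\tr(i^\tr)$, for the construction to be consistent with the stated rate correspondence; such edges contribute zero to~\eqref{gmas} by~\eqref{gmas_simplified} regardless of the value assigned to $k^\tr_{i^\tr \to j^\tr}$. If several edges of $E$ collapse onto the same edge of $E^\tr$, the hypothesis on $k^\tr$ is to be read as $k^\tr_{i^\tr \to j^\tr} = \sum_{(i \to j) \in g^{-1}(i^\tr \to j^\tr)} k_{i \to j}$, after which the term-wise identity above still produces the equality of the two sums. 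Once this accounting is in place, the proof is a direct term-by-term comparison, with no further analysis required.
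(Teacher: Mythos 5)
Your proof is correct and takes essentially the same route as the paper's, which is precisely this term-by-term reindexing of the GMAS sum via $g$, using properties (i) and (ii) of Definition~\ref{def:translation} together with the rate correspondence. Your extra bookkeeping for a non-injective $g$ is in fact vacuous---since $y$ is injective for a CRN, conditions (i) and (ii) force $g$ to be injective---while your observation that any edge of $E^\tr$ outside the image of $g$ must be a phantom edge is a genuine implicit assumption that the paper's one-line proof also leaves unstated.
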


\begin{proof}
The ODEs associated with the MAS $(G_k,y)$ are determined by $f^G_k(x)$, cf.~\eqref{mas},
whereas the ODEs associated with the GMAS $(G^\tr_{k^\tr},y^\tr,\tilde y^\tr)$ are determined by $f^{G^\tr}_{k^\tr}(x)$, cf.~\eqref{gmas}.
By Definition~\ref{def:translation} and the construction of $k^\tr$, we have
\[ f^G_{k}(x) = \sum_{i \to j \in E} k_{i \to j} \, x^{y(i)} \, (y(j) - y(i))= \sum_{i^\tr \to j^\tr \in E^\tr} k^\tr_{i^\tr \to j^\tr} \, x^{\tilde y^\tr(i^\tr)} \, (y^\tr(j^\tr) - y^\tr(i^\tr)) = f^{G^\tr}_{k^\tr}(x).\]
\end{proof}

Lemmas~\ref{lemma:equiv1} and~\ref{lemma:equiv2} provide a framework for parametrizing the set of positive equilibria of a (classical) MAS~\eqref{mas}, by applying Theorems~\ref{main} and~\ref{main2}.

\[
\begin{tikzcd}
\mbox{\doublebox{\begin{tabular}{c} Original CRN, \\ MAS $(G_k,y)$ \end{tabular}}} 
\arrow[rr,"\begin{tabular}{c} Network translation \\ (Lemma~\ref{lemma:equiv1}) \end{tabular}"] & & \mbox{\doublebox{\begin{tabular}{c} Translated GCRN, \\ MAS $(G_{k^\tr}^\tr,y^\tr, \tilde y^\tr)$ \end{tabular}}}
\arrow[ddl,"\begin{tabular}{c} Network redirection \\ (Lemma~\ref{lemma:equiv2}) \end{tabular}"] \\
& & \\
& \mbox{\doublebox{\begin{tabular}{c} \vd GCRN, \\ GMAS $(\rd G^\tr_{\rd k^\tr},y^\tr,\tilde y^\tr)$ \end{tabular}}} 
\arrow[luu,"\begin{tabular}{c} Parametrization \\ (Theorems \ref{main} and~\ref{main2}) \end{tabular}"] & 
\end{tikzcd}
\]

In biochemical applications, a suitable GCRN that corresponds to a given CRN may not be apparent. In particular, in order to apply Theorem~\ref{main}, we want the translated network to have effective deficiency zero, and to apply Theorem~\ref{main2}, we want the kinetic deficiency to be as low as possible and the translated and \vd network to be weakly reversible.

A \emph{translation scheme} involves the addition of linear combinations of species to each side of a reaction arrow~\cite{J2014}. This operation preserves reaction vectors and establishes a correspondence between source complexes in the original network and kinetic complexes in the new one. For small networks, this may suffice to create a suitably well-connected translation; however, it is extremely challenging for large networks. Computational approaches to optimal network translation have been conducted in \cite{J2015} and \cite{Tonello2017}.

\subsection{Examples}
\label{sec:examples}

The following examples are drawn from the biochemical literature.

\begin{exa}
Recall the histidine kinase network~\eqref{intro-example} from the introduction and apply the following translation scheme:
\begin{equation*}
\begin{tikzcd}
X \arrow[r, "k_1"] & X_p & (+ Y)\\[-0.2in]
X_p + Y \arrow[r,  yshift=+0.5ex,"k_2"] & X + Y_p \arrow[l,  yshift=-0.5ex,"k_3"] & (+ 0) \\[-0.2in]  
Y_p \arrow[r, "k_4"] & Y & (+ X)
\end{tikzcd}
\end{equation*}
The resulting GCRN together with an additional phantom edge yields a weakly reversible GCRN, given by the (edge labeled) graph

\begin{equation}
\label{example233}
\begin{tikzcd}
\ovalbox{$\begin{array}{c} 1 \\ \\ \end{array} \Bigg\lvert \begin{array}{c} X + Y \\  (X) \end{array}$} \arrow[r,"k_1"] & 
\ovalbox{$\begin{array}{c} 2 \\ \\ \end{array}\Bigg\lvert \begin{array}{c} X_p + Y \\  (X_p+Y) \end{array}$} \arrow[d,xshift=+0.5ex,"k_2"]\\
\ovalbox{$\begin{array}{c} 4 \\ \\ \end{array}\Bigg\lvert \begin{array}{c}  X + Y_p \\ (Y_p) \end{array}$} \arrow[u,"k_4"] & 
\ovalbox{$\begin{array}{c} 3 \\ \\ \end{array}\Bigg\lvert \begin{array}{c} X + Y_p \\ (X+Y_p) \end{array}$} \arrow[u,xshift=-0.5ex,"k_3"] \arrow[l,color=red,"\fp"]
\end{tikzcd}
\end{equation}
The stoichiometric complex $X+Y_p$ appears twice in~\eqref{example233}, 
specifically, $[3]=[4]=\{3,4\}$, and the network is \vd for $V^* = \{1,2,3\}$.
The network has a stoichiometric deficiency of one ($\delta = 1$) and a kinetic deficiency of zero ($\tilde \delta = 0)$.  
The condensed network is given by the following graph:
\[
\begin{tikzcd}
\{1\} \arrow[r] & \{2\} \arrow[d,xshift=0.5ex] \\
& \{3,4\} \arrow[u,xshift=-0.5ex] \arrow[lu]
\end{tikzcd}
\]
It has a deficiency of zero ($\delta' = 0$). Theorem~\ref{main} guarantees that the equilibrium set coincides with the parametrized set of complex-balanced equilibria. Furthermore, since $\tilde \delta = 0$ and~\eqref{example233} is weakly reversible, Theorem~\ref{main2} guarantees that there is a positive parametrization of the form~\eqref{eq:param}. 


By the construction preceding Theorem~\ref{main2}, we compute the matrix $M$ (and further~$H$ and~$B$). In particular, we choose a spanning forest $F = (V, \mathcal{E})$ for the graph~\eqref{example233} with edges $1 \to 2$, $1 \to 3$, and $1 \to 4$, and we compute the corresponding differences of kinetic complexes $X_p + Y - X$, $X + Y_p - X$, and $Y_p - X$:
\[
M = \begin{array}{c} X \\ X_p \\ Y \\ Y_p \end{array} \hspace{-1ex}
\left[ \begin{array}{ccc} -1 & 0 & -1 \\ 1 & 0 & 0 \\ 1 & 0 & 0 \\ 0 & 1 & 1\end{array} \right], \hspace{0.25in} H = \left[ \begin{array}{ccc} 0 & 1 & -1 \\ 1 & 1 & -1 \\ 0 & 0 & 0 \\ 0 & 1 & 0 \end{array} \right], \hspace{0.25in} B = \left[ \begin{array}{c} 0 \\ -1 \\ 1 \\ 0 \end{array} \right] .
\]
Thereby $M^T H M^T = M^T$, that is, $H$ is a generalized inverse of $M^T$,
and $\im B = \ker M^T$.

In order to determine the parametrization~\eqref{eq:param}, it remains to compute the tree constants $K=K(k^*,\fp)$ of the graph~\eqref{example233} and their quotients $\tc=\tc(k^*,\fp)$. 
We find
\[K_1 = k_2k_4\fp, \; \; \; K_2 = k_1(k_3+\fp)k_4, \; \; \; K_3 = k_1k_2k_4, \; \mbox{ and } \; K_4 = k_1k_2\fp.\]
Taking the spanning forest $F = (V, \mathcal{E})$ as above gives
\[
\tc_1 = \frac{K_2}{K_1} = \frac{k_1(k_3 + \fp)}{k_2\fp}, \hspace{0.25in} \tc_2 = \frac{K_3}{K_1} = \frac{k_1}{\fp}, \hspace{0.25in}
\tc_3  = \frac{K_4}{K_1}= \frac{k_1}{k_4}.\]
As a consequence, the rational parametrization~\eqref{eq:param} amounts to
\[
\left \{ \quad
\begin{aligned}
x &= \tc_2^1 \tc_3^{-1} \cdot 1 &&= \frac{k_4}{\fp} , \\ 
x_p &= \tc_1^1 \tc_2^1 \tc_3^{-1} \cdot \fpp^{-1} &&= \frac{k_1(k_3+\fp)k_4}{k_2\fp^2\fpp} , \\ 
y &= 1 \cdot \fpp^1 &&= \fpp , \\ 
y_p &= \tc_2^1 \cdot 1 &&= \frac{k_1}{\fp} ,
\end{aligned}
\right.
\]
where $\fp,\fpp > 0$.
\end{exa}
\begin{exa}
Consider the following EnvZ-OmpR signaling pathway, which was first proposed in \cite{Sh-F}, together with the translation scheme proposed in \cite{J2014}:
\[
      \begin{tikzcd}
XD \arrow[r,yshift=+0.5ex,"k_1"] & X \arrow[l,yshift=-0.5ex,"k_2"] \arrow[r,yshift=+0.5ex,"k_3"]& XT \arrow[l,yshift=-0.5ex,"k_4"] \arrow[r,"k_5"]& X_p & (+ XD + XT + Y)\\[-0.1in]
X_p + Y \arrow[r,yshift=+0.5ex,"k_6"] & X_pY \arrow[l,yshift=-0.5ex,"k_7"] \arrow[r,yshift=+0.5ex,"k_8"]& X + Y_p &  & (+ XD + XT) \\[-0.1in]
XD + Y_p \arrow[r,yshift=+0.5ex,"k_9"] & XDY_p \arrow[l,yshift=-0.5ex,"k_{10}"] \arrow[r,yshift=+0.5ex,"k_{11}"]& XD + Y &  & (+ X + XT) \\[-0.1in]
XT + Y_p \arrow[r,yshift=+0.5ex,"k_{12}"] & XTY_p \arrow[l,yshift=-0.5ex,"k_{13}"] \arrow[r,yshift=+0.5ex,"k_{14}"]& XT + Y &  & (+ X + XD) \\[-0.1in]
      \end{tikzcd}
\]
The resulting GCRN together with an additional phantom edge yields a weakly reversible GCRN, given by the (edge labeled) graph
\begin{equation}\label{envz_ompr_directed}
        \begin{tikzcd}[column sep=0.5cm]
            \mbox{\ovalbox{$\begin{array}{c} 1 \\ \\ \end{array}\Bigg\lvert \begin{array}{c} 2XD + XT + Y \\ (XD) \end{array}$}} \arrow[r, rightharpoonup, yshift=+0.2ex, "k_{1}"]
          & \mbox{\ovalbox{$\begin{array}{c}  2 \\ \\ \end{array}\Bigg\lvert \begin{array}{c}XD + X + XT + Y \\ (X) \end{array}$}} \arrow[l, rightharpoonup, yshift=-0.2ex, "k_{2}"] \arrow[r, rightharpoonup, yshift=+0.2ex, "k_{3}"]
          & \mbox{\ovalbox{$\begin{array}{c}  3 \\ \\ \end{array}\Bigg\lvert \begin{array}{c} XD + 2XT + Y \\ (XT) \end{array}$}} \arrow[l, rightharpoonup, yshift=-0.2ex, "k_{4}"] \arrow[d, "k_{5}"] & \\
            \mbox{\ovalbox{$\begin{array}{c} 9 \\ \\ \end{array}\Bigg\lvert \begin{array}{c} X + XT + XTY_p \\ (XTY_p) \end{array}$}} \arrow[ur, "k_{14}"] \arrow[rd, "k_{13}"]
          & \mbox{\ovalbox{$\begin{array}{c} 7 \\ \\ \end{array}\Bigg\lvert \begin{array}{c} XD + X + XDY_p \\ (XDY_p) \end{array}$}} \arrow[u, "k_{11}"] \arrow[d, rightharpoonup, "k_{10}", xshift=+0.2ex]
          & \mbox{\ovalbox{$\begin{array}{c}  4 \\ \\ \end{array}\Bigg\lvert \begin{array}{c} XD + XT + X_p + Y \\ (X_p + Y) \end{array}$}} \arrow[d, rightharpoonup, "k_{6}", xshift=+0.2ex] \\
          \mbox{\ovalbox{$\begin{array}{c} 8 \\ \\ \end{array}\Bigg\lvert \begin{array}{c} XD + X + XT + Y_p \\ (XT + Y_p) \end{array}$}} \arrow[u, rightharpoonup, "k_{12}", xshift=-0.2ex] 
          &  \mbox{\ovalbox{$\begin{array}{c} 6 \\ \\ \end{array}\Bigg\lvert \begin{array}{c} XD + X + XT + Y_p \\ (XD + Y_p) \end{array}$}} \arrow[u, rightharpoonup, "k_{9}", xshift=-0.2ex] \arrow[l,color=red, "\fp"]
          & \mbox{\ovalbox{$\begin{array}{c} 5 \\ \\ \end{array}\Bigg\lvert \begin{array}{c} XD + XT + X_pY \\ (X_pY) \end{array}$}} \arrow[l, "k_8"'] \arrow[u, rightharpoonup, "k_{7}", xshift=-0.2ex] &
      \end{tikzcd}
    \end{equation}
Thereby $6 \to 8$ is the phantom edge (with label $\fp>0$) since $y(6) = y(8) = XD + X + XT + Y_p$. 
The network is \vd for $V^* = V\setminus\{8\}$.
It can be quickly checked that the condensed graph $G'$ has deficiency zero, so that~\eqref{envz_ompr_directed} has an effective deficiency of zero ($\delta' = 0$). It follows from Theorem~\ref{main} that every equilibrium point is in the parametrized set of CBE (i.e.~$X_k = \bar{Z}_k$). It can also be checked that~\eqref{envz_ompr_directed} has a kinetic deficiency of one ($\tilde \delta = 1$). Hence, in order to apply Theorem~\ref{main2} (statement 2), we need to first determine if there is $\fp=h(k^*)$ such that $\tc(k^*,h(k^*))^{C} = 1$.


We choose the spanning forest $F = (V, \mathcal{E})$ for the graph~\eqref{envz_ompr_directed} consisting of the edges $1 \to i$ for $i=2, \ldots, 9$. We compute the following matrices:
\[\tiny 
M = \hspace{-1ex} \begin{array}{c} XD \\ X \\ XT \\ X_p \\ Y \\ X_pY \\ Y_p \\ XDY_p \\ XTY_p \end{array} \hspace{-1ex} \left[ \begin{array}{cccccccc} \hspace{-0.05in}  -1 \hspace{-0.05in} & \hspace{-0.05in}  -1  \hspace{-0.05in} &  \hspace{-0.05in}  -1  \hspace{-0.05in}&  \hspace{-0.05in}  -1  \hspace{-0.05in}&  0 &  \hspace{-0.05in}  -1 \hspace{-0.05in}&  \hspace{-0.05in}  -1 \hspace{-0.05in} &  \hspace{-0.05in}  -1 \hspace{-0.05in}\\1 & 0 & 0 & 0 & 0 & 0 & 0 & 0\\0 & 1 & 0 & 0 & 0 & 0 & 1 & 0\\0 & 0 & 1 & 0 & 0 & 0 & 0 & 0\\0 & 0 & 1 & 0 & 0 & 0 & 0 & 0\\0 & 0 & 0 & 1 & 0 & 0 & 0 & 0\\0 & 0 & 0 & 0 & 1 & 0 & 1 & 0\\0 & 0 & 0 & 0 & 0 & 1 & 0 & 0\\0 & 0 & 0 & 0 & 0 & 0 & 0 & 1 \end{array} \right], \; H =  \left[ \begin{array}{cccccccc} 0&0&0&0&0&0&0& \hspace{-0.05in}-1 \hspace{-0.05in}\\1&0&0&0&0&0&0& \hspace{-0.05in}-1 \hspace{-0.05in}\\0&1&0&0&0&0&0& \hspace{-0.05in}-1 \hspace{-0.05in}\\0&0&1&0&0&0&0& \hspace{-0.05in}-1 \hspace{-0.05in}\\0&0&0&0&0&0&0&0\\0&0&0&1&0&0&0& \hspace{-0.05in}-1 \hspace{-0.05in}\\0&0&0&0&1&0&0&0\\0&0&0&0&0&1&0& \hspace{-0.05in}-1 \hspace{-0.05in}\\0&0&0&0&0&0&0&0 \end{array} \right], \; B = \left[ \begin{array}{cc}  0 & 1 \\ 0 & 1 \\ 0 & 1 \\  \hspace{-0.05in}-1 \hspace{-0.05in} & 1 \\ 1 & 0 \\ 0 & 1 \\ 0 & 0 \\ 0 & 1 \\ 0 & 1 \end{array} \right], \; C^T = \left[ \begin{array}{c}  0  \\  \hspace{-0.05in}-1 \hspace{-0.05in} \\ 0 \\ 0 \\  \hspace{-0.05in}-1 \hspace{-0.05in} \\ 0 \\ 1 \\ 0 \end{array} \right] 
\]

And we find the following tree constants:
\[ \small \begin{split}
K_{1} &= (k_4 + k_5) (((k_9 + \fp) k_{14} + k_9 k_{13}) k_{11} + \fp k_{14} k_{10}) k_2 k_6 k_8 k_{12}\\
K_{2} &= (k_{4} + k_{5}) (((k_{9} + \fp) k_{14} + k_{9} k_{13}) k_{11} + \fp k_{14} k_{10}) k_{1} k_{6} k_{8} k_{12}\\
K_{3} &= k_{6} (((k_{9} + \fp) k_{14} + k_{9} k_{13}) k_{11} + \fp k_{14} k_{10}) k_{12} k_{1} k_{8} k_{3}\\
K_{4} &= (k_{7} + k_{8}) (((k_{9} + \fp) k_{14} + k_{9} k_{13}) k_{11} + \fp k_{14} k_{10}) k_{5} k_{1} k_{3} k_{12}\\
K_{5} &= k_{5} (((k_{9} + \fp) k_{14} + k_{9} k_{13}) k_{11} + \fp k_{14} k_{10}) k_{12} k_{1} k_{6} k_{3}\\
K_{6} &= (k_{10} + k_{11}) k_{12} (k_{13} + k_{14}) k_{1} k_{3} k_{5} k_{6} k_{8}\\
K_{7} &= k_{1} k_{12} k_{3} k_{5} k_{6} k_{8} k_{9} (k_{13} + k_{14})\\
K_{8} &= (k_{13} + k_{14}) k_{1} k_{3} k_{5} k_{6} k_{8} \fp (k_{10} + k_{11})\\
K_{9} &= k_{1} k_{3} k_{5} k_{6} k_{8} \fp (k_{10} + k_{11}) k_{12}
\end{split} \]
Constructing $\tc=\tc(k^*,\fp)$ according to the spanning forest $F = (V,\mathcal{E})$ as above gives the $\tilde \delta = 1$ condition
\[
\tc(k^*,\fp)^C = \left(\frac{K_{3}}{K_{1}}\right)^{-1}\left(\frac{K_{6}}{K_{1}}\right)^{-1} \left(\frac{K_{8}}{K_{1}}\right) = \frac{k_2(k_4+k_5)\fp}{k_1k_3k_{12}} = 1,
\]
which can be solved explicitly for $\fp$ (in terms of $k^*$),
\[
\fp = \frac{k_1k_3k_{12}}{k_2(k_4+k_5)}.
\]
By Theorem~\ref{main2} (statement 2), we have a monomial parametrization of the form~\eqref{eq:param2}. In particular, we obtain:
\[ \small \begin{split}
XD & = \left( \frac{((k_{2} (k_{4}+k_{5}) (k_{13}+k_{14}) k_{9}+k_{1} k_{12} k_{14} k_{3}) k_{11}+k_{1} k_{3} k_{12} k_{14} k_{10}) (k_{4}+k_{5}) k_{2}}{(k_{10}+k_{11}) k_{12} k_{5} k_{3}^2 k_{1}^2 } \right)\fpp_1\\
X & = \left( \frac{(((k_{2} (k_{4}+k_{5}) k_{9}+k_{1} k_{3} k_{12} k_{14}+k_{13} k_{2} k_{9} (k_{4}+k_{5})) k_{11}+k_{1} k_{3} k_{12} k_{14} k_{10}) (k_{4}+k_{5})}{ (k_{10}+k_{11}) k_{12} k_{1} k_{5} k_{3}^2} \right)\fpp_1\\
XT & = \left( \frac{((k_{2} (k_{4}+k_{5}) k_{9}+k_{1} k_{3} k_{12}) k_{14}+k_{13} k_{2} k_{9} (k_{4}+k_{5})) k_{11}+k_{1} k_{3} k_{12} k_{14} k_{10}}{(k_{10}+k_{11}) k_{12} k_{1} k_{3} k_{5} } \right)\fpp_1\\
X_p & = \left( \frac{(((k_{2} (k_{4}+k_{5}) k_{9}+k_{1} k_{3} k_{12}) k_{14}+k_{13} k_{2} k_{9} (k_{4}+k_{5})) k_{11}+k_{1} k_{3} k_{12} k_{14} k_{10}) (k_{7}+k_{8}) }{(k_{10}+k_{11}) k_{12} k_{3} k_{1} k_{8} k_{6}} \right)\frac{\fpp_1
}{\fpp_2}\\
Y & = \fpp_2\\
X_pY & = \left( \frac{((k_{2} (k_{4}+k_{5}) k_{9}+k_{1} k_{3} k_{12}) k_{14}+k_{13} k_{2} k_{9} (k_{4}+k_{5})) k_{11}+k_{1} k_{3} k_{12} k_{14} k_{10}}{(k_{10}+k_{11}) k_{12} k_{3} k_{1} k_{8}} \right)\fpp_1\\
Y_p & = \left( \frac{((k_{2} (k_{4}+k_{5}) k_{9}+k_{1} k_{3} k_{12}) k_{14}+k_{13} k_{2} k_{9} (k_{4}+k_{5})) k_{11}+k_{1} k_{3} k_{12} k_{14} k_{10}}{k_{5} k_{3} k_{1} (k_{13}+k_{14}) (k_{10}+k_{11})} \right)\\
XDY_p & = \left( \frac{k_{2} (k_{4}+k_{5}) (k_{13}+k_{14}) k_{9}}{(k_{10}+k_{11}) k_{1} k_{3} k_{12}} \right) \fpp_1\\
XTY_p & = \fpp_1
\end{split}\]
over $\fpp_1, \fpp_2 \in \RR_{>0}$. This parametrization was obtained via alternative methods in \cite{M-D-S-C} and \cite{J2014}. 

Note that the concentration of $Y_p$ does not depend upon either parameter $\fpp_1$ or $\fpp_2$. Hence it takes the same value at every positive steady state. This property has been called \emph{absolute concentration robustness} (ACR) in the literature, and the robust steady state value of $Y_p$ has been obtained by other methods in \cite{Sh-F,Karp,Tonello2017,M-D-S-C}.
\end{exa}

\begin{exa}
Consider the model for the Shuttled WNT signaling pathway from \cite{G-H-R-S}, which has a deficiency of four ($\delta = 4$), taken with the following translation scheme:
\begin{equation}\small
\label{wnt}
      \begin{tikzcd}
X_1 \arrow[r, yshift=+0.5ex, "k_1"] & X_2 \arrow[l, yshift=-0.5ex, "k_2"] \arrow[r, yshift=0.5ex, "k_{3}"] & X_3 \arrow[l,yshift=-0.5ex, "k_{4}"] & (+ 0)\\[-0.2in]
X_5 \arrow[r,  yshift=+0.5ex,"k_{5}"] & X_7 \arrow[l,  yshift=-0.5ex,"k_{6}"] & & (+ 0) \\[-0.2in]
X_{11}+ X_{12} \arrow[r,  yshift=+0.5ex,"k_{7}"] & X_{13} \arrow[l,  yshift=-0.5ex,"k_{8}"] & & (+ 0)  \\[-0.2in]  
X_{3} + X_6 \arrow[r,  yshift=+0.5ex,"k_{9}"] & X_{15} \arrow[l,  yshift=-0.5ex,"k_{10}"] \arrow[r, "k_{11}"] & X_3 + X_7 & (+ X_9)\\[-0.2in] 
X_{7} + X_9 \arrow[r,  yshift=+0.5ex,"k_{12}"] & X_{17} \arrow[l,  yshift=-0.5ex,"k_{13}"] \arrow[r, "k_{14}"] & X_6 + X_9 & (+ X_3)\\[-0.2in]  
X_{2} + X_4 \arrow[r,  yshift=+0.5ex,"k_{15}"] & X_{14} \arrow[l,  yshift=-0.5ex,"k_{16}"] \arrow[r, "k_{17}"] & X_2 + X_5 & (+ X_8)\\[-0.2in]  
X_{5} + X_8 \arrow[r,  yshift=+0.5ex,"k_{18}"] & X_{16} \arrow[l,  yshift=-0.5ex,"k_{19}"] \arrow[r, "k_{20}"] & X_4 + X_8 & (+ X_2)\\[-0.2in]  
X_{4} + X_{10} \arrow[r,  yshift=+0.5ex,"k_{21}"] & X_{18} \arrow[l,  yshift=-0.5ex,"k_{22}"] \arrow[r, "k_{23}"] & X_4 & (+ X_6)\\[-0.2in] 
X_{6} + X_{11} \arrow[r,  yshift=+0.5ex,"k_{24}"] & X_{19} \arrow[l,  yshift=-0.5ex,"k_{25}"] \arrow[r, "k_{26}"] & X_6 & (+ X_4) \\[-0.2in]  
X_{10} \arrow[rr,  yshift=+0.5ex,"k_{27}"] \arrow[ddr, yshift=0.5ex,"k_{29}"] & & X_{11} \arrow[ll,  yshift=-0.5ex,"k_{28}"] \arrow[ddl,"k_{31}"] & \\[-0.2in]
& &  & (+ X_4 + X_6)\\[-0.2in]
& \emptyset \arrow[uul,yshift=-0.5ex,"k_{30}"] &  &
      \end{tikzcd}
\end{equation}
In the representation above, we have kept the indexing of the species $X_1$ through $X_{19}$ as in \cite{G-H-R-S}, but renamed the rate constants. 
Via Lemmas~\ref{lemma:equiv1} and~\ref{lemma:equiv2},
the network corresponds to a weakly reversible, \vd GCRN:
\[
\small
\begin{tikzcd}[column sep=0.5cm]
\mbox{\ovalbox{$\begin{array}{c}  1 \\ \\ \end{array}\Bigg\lvert \begin{array}{c} X_1 \\ (X_1) \end{array}$}} \arrow[r, yshift=+0.5ex, "k_1"] & \mbox{\ovalbox{$\begin{array}{c} 2 \\ \\ \end{array}\Bigg\lvert \begin{array}{c} X_2 \\ (X_2) \end{array}$}} \arrow[l, yshift=-0.5ex, "k_2"] \arrow[r, yshift=0.5ex, "k_{3}"] & \mbox{\ovalbox{$\begin{array}{c} 3 \\ \\ \end{array}\Bigg\lvert \begin{array}{c} X_3 \\ (X_3) \end{array}$}} \arrow[l,yshift=-0.5ex, "k_{4}"] &\\[-0.2in]
\mbox{\ovalbox{$\begin{array}{c} 4 \\ \\ \end{array}\Bigg\lvert \begin{array}{c} X_5 \\ (X_5) \end{array}$}} \arrow[r,  yshift=+0.5ex,"k_{5}"] & \mbox{\ovalbox{$\begin{array}{c} 5 \\ \\ \end{array}\Bigg\lvert \begin{array}{c} X_7 \\ (X_7) \end{array}$}} \arrow[l,  yshift=-0.5ex,"k_{6}"] & \mbox{\ovalbox{$\begin{array}{c} 6 \\ \\ \end{array}\Bigg\lvert \begin{array}{c} X_{11}+ X_{12} \\ (X_{11} + X_{12}) \end{array}$}} \arrow[r,  yshift=+0.5ex,"k_{7}"] & \mbox{\ovalbox{$\begin{array}{c} 7 \\ \\ \end{array}\Bigg\lvert \begin{array}{c} X_{13} \\ (X_{13}) \end{array}$}} \arrow[l,  yshift=-0.5ex,"k_{8}"] \\
\mbox{\ovalbox{$\begin{array}{c}  8 \\ \\ \end{array}\Bigg\lvert \begin{array}{c} X_3+X_6+X_{9} \\ (X_3 + X_{6}) \end{array}$}}  \arrow[r,yshift=0.5ex, "k_{9}"]  & \mbox{\ovalbox{$\begin{array}{c}  9 \\ \\ \end{array}\Bigg\lvert \begin{array}{c} X_9 + X_{15} \\ (X_{15}) \end{array}$}} \arrow[l,yshift=-0.5ex,"k_{10}"] \arrow[d,"k_{11}"] & \mbox{\ovalbox{$\begin{array}{c}  12 \\ \\ \end{array}\Bigg\lvert \begin{array}{c} X_2+X_4+X_{8} \\ (X_2 + X_{4}) \end{array}$}}  \arrow[r,yshift=0.5ex, "k_{15}"]  & \mbox{\ovalbox{$\begin{array}{c}  13 \\ \\ \end{array}\Bigg\lvert \begin{array}{c} X_8 + X_{14} \\ (X_{14}) \end{array}$}} \arrow[l,yshift=-0.5ex,"k_{16}"] \arrow[d,"k_{17}"]\\
\mbox{\ovalbox{$\begin{array}{c} 11 \\ \\ \end{array}\Bigg\lvert \begin{array}{c} X_3+X_{17} \\ (X_{17}) \end{array}$}} \arrow[r,yshift=-0.5ex,"k_{13}"'] \arrow[u,"k_{14}"] &
\mbox{\ovalbox{$\begin{array}{c}  10 \\ \\ \end{array}\Bigg\lvert \begin{array}{c} X_3+X_7 + X_9 \\ (X_7 + X_9) \end{array}$}} \arrow[l,yshift=0.5ex,"k_{12}"']& \mbox{\ovalbox{$\begin{array}{c}  15 \\ \\ \end{array}\Bigg\lvert \begin{array}{c} X_2+X_{16} \\ (X_{16}) \end{array}$}} \arrow[r,yshift=-0.5ex,"k_{19}"'] \arrow[u,"k_{20}"] &
\mbox{\ovalbox{$\begin{array}{c}  14 \\ \\ \end{array}\Bigg\lvert \begin{array}{c} X_2+X_5 + X_8 \\ (X_5 + X_8) \end{array}$}} \arrow[l,yshift=0.5ex,"k_{18}"']
      \end{tikzcd}
\]
\begin{equation}\label{mess1}
\begin{tikzcd}[column sep=0.5cm]
\mbox{\ovalbox{$\begin{array}{c} 16 \\ \\ \end{array}\Bigg\lvert \begin{array}{c} X_4+X_6+X_{10} \\ (X_4 + X_{10}) \end{array}$}} \arrow[d,"k_{21}"] & \mbox{\ovalbox{$\begin{array}{c}  17 \\ \\ \end{array}\Bigg\lvert \begin{array}{c} X_4 + X_6 + X_{10} \\ (X_{10}) \end{array}$}} \arrow[l,color=red, "\fp_1"'] \arrow[r,  yshift=+0.5ex, "k_{27}"] \arrow[d,xshift=0.5ex,"k_{29}"]& \mbox{\ovalbox{$\begin{array}{c}  18 \\ \\ \end{array}\Bigg\lvert \begin{array}{c} X_4+X_6+X_{11} \\ (X_{11}) \end{array}$}} \arrow[r,color=red, "\fp_{2}"] \arrow[l, yshift=-0.5ex, "k_{28}"] \arrow[dl,"k_{31}"]& \mbox{\ovalbox{$\begin{array}{c} 19 \\ \\ \end{array}\Bigg\lvert \begin{array}{c} X_4+X_6+X_{11} \\ (X_6+X_{11}) \end{array}$}} \arrow[dl,"k_{24}"]\\
\mbox{\ovalbox{$\begin{array}{c}  20 \\ \\ \end{array}\Bigg\lvert \begin{array}{c} X_6+X_{18} \\ (X_{18}) \end{array}$}} \arrow[r,"k_{23}"'] \arrow[ur,"k_{22}"'] &
\mbox{\ovalbox{$\begin{array}{c} 21 \\ \\ \end{array}\Bigg\lvert \begin{array}{c} X_4+X_6 \\ (0) \end{array}$}} \arrow[u,xshift=-0.5ex,"k_{30}"] & \mbox{\ovalbox{$\begin{array}{c}  22 \\ \\ \end{array}\Bigg\lvert \begin{array}{c} X_4 + X_{19} \\ (X_{19}) \end{array}$}} \arrow[l,"k_{26}"] \arrow[u,"k_{25}"'] &  
      \end{tikzcd}
\end{equation}

\noindent 
Thereby, $17 \to 16$ and $18 \to 19$ (with labels $\fp_1> 0$ and $\fp_2 > 0$) are phantom egdes 
since $y(16)=y(17)=X_4+X_6+X_{10}$ and $y(18)=y(19)=X_4+X_6+X_{11}$. 
The network is \vd for $V^* = V \setminus \{16,19\}$.
It can be quickly checked that the GCRN has a stoichiometric deficiency of two ($\delta = 2$) but effective and kinetic deficiencies of zero ($\delta' = 0$ and $\tilde \delta = 0$). By Theorems~\ref{main} and~\ref{main2} (statement 1), the equilibrium set can be parametrized by~\eqref{eq:param}.

Explicitly, we choose the spanning forest $F = (V, \mathcal{E})$ for the graph~\eqref{mess1} consisting of the edges $1 \to i$ for $i \in \{2, 3 \}$, $4 \to 5$, $6 \to 7$, $8 \to i$ for $i \in \{ 9, 10, 11\}$, $12 \to i$ for $i \in \{ 13, 14, 15 \}$, and $16 \to i$ for $i \in \{ 17, \ldots, 22 \}$. Then we compute the corresponding matrix $M$:
\[
\tiny
M = \left[ \begin{array}{cccccccccccccccc}
-1&-1&0&0&0&0&0&0&0&0&0&0&0&0&0&0\\
1&0&0&0&0&0&0&-1&-1&-1&0&0&0&0&0&0\\
0&1&0&0&-1&-1&-1&0&0&0&0&0&0&0&0&0\\
0&0&0&0&0&0&0&-1&-1&-1&-1&-1&-1&-1&-1&-1\\
0&0&-1&0&0&0&0&0&1&0&0&0&0&0&0&0\\
0&0&0&0&-1&-1&-1&0&0&0&0&0&1&0&0&0\\
0&0&1&0&0&1&0&0&0&0&0&0&0&0&0&0\\
0&0&0&0&0&0&0&0&1&0&0&0&0&0&0&0\\
0&0&0&0&0&1&0&0&0&0&0&0&0&0&0&0\\
0&0&0&0&0&0&0&0&0&0&0&-1&-1&-1&-1&-1\\
0&0&0&-1&0&0&0&0&0&0&0&1&1&0&0&0\\
0&0&0&-1&0&0&0&0&0&0&0&0&0&0&0&0\\
0&0&0&1&0&0&0&0&0&0&0&0&0&0&0&0\\
0&0&0&0&0&0&0&1&0&0&0&0&0&0&0&0\\
0&0&0&0&1&0&0&0&0&0&0&0&0&0&0&0\\
0&0&0&0&0&0&0&0&0&1&0&0&0&0&0&0\\
0&0&0&0&0&0&1&0&0&0&0&0&0&0&0&0\\
0&0&0&0&0&0&0&0&0&0&0&0&0&1&0&0\\
0&0&0&0&0&0&0&0&0&0&0&0&0&0&0&1 \end{array} \right]
\]
We have rank$(M)=16$ and therefore nullity$(M^T)=19-16=3$. A matrix $B$ with with $\im B = \ker M^T$ and $\ker B = \{0\}$ is given by
\[
\tiny
B^T = \left[ 
\begin{array}{ccccccccccccccccccc} 
1 & 1 & 1 & 0 & 1 & 0 & 1 & 0 & 0 & 0 & 0 & 0 & 0 & 1 & 1 & 1 & 1 & 0 & 0 \\
0 & 0 & 0 & 0 & 0 & 0 & 0 & 0 & 0 & 0 & 0 & 1& 1 & 0 & 0 & 0 & 0 & 0 & 0 \\
1 & 1 & 1 & 0 & 0 & 0 & 0 & 1 & 1 & 0 & 0 & 0 & 0 & 1 & 1 & 1 & 1 & 0 & 0 
\end{array} 
\right] .
\]
By reducing $M^T$ to row echelon form, we obtain the following generalized inverse of $M^T$:
\[
\tiny
H = \left[ \begin{array}{cccccccccccccccc}
0&-1&0&0&0&-1&0&0&0&0&0&1&-1&0&0&0\\
1&-1&0&0&0&-1&0&0&0&0&0&1&-1&0&0&0\\
0&0&0&0&0&-1&0&0&0&0&0&1&-1&0&0&0\\
0&0&0&0&0&0&0&0&0&0&-1&0&0&0&0&0\\
0&0&-1&0&0&-1&1&0&0&0&0&0&0&0&0&0\\
0&0&0&0&0&0&0&0&0&0&0&-1&1&0&0&0\\
0&0&0&0&0&-1&1&0&0&0&0&0&0&0&0&0\\
1&-1&1&0&0&0&-1&0&0&1&-1&1&-1&0&0&0\\
0&0&0&0&0&0&0&0&0&0&0&0&0&0&0&0\\
0&0&0&0&0&0&0&0&0&0&1&0&0&0&-1&0\\
0&0&0&0&0&0&0&0&0&0&0&1&0&0&-1&0\\
0&0&0&-1&0&0&0&0&0&0&0&-1&0&0&1&0\\
0&0&0&0&0&0&0&0&0&0&0&0&0&0&0&0\\
1&-1&0&0&0&-1&0&1&0&0&-1&1&-1&0&0&0\\
0&0&0&0&1&-1&0&0&0&0&0&0&0&0&0&0\\
1&-1&0&0&0&-1&0&0&1&0&-1&1&-1&0&0&0\\
0&0&0&0&0&0&0&0&0&0&0&0&0&0&0&0\\
0&0&0&0&0&0&0&0&0&0&0&0&0&1&-1&0\\
0&0&0&0&0&0&0&0&0&0&0&0&0&0&-1&1
\end{array} \right]
\]
That is, $M^T H M^T = M^T$. 
From the graph~\eqref{mess1},
we obtain the tree constants $K=K(k^*,\fp)$:
\[\tiny \begin{array}{|l|l|l|}
\hline K_{1} = k_2 k_4 & K_{8} = (k_{10}+k_{11})k_{12}k_{14} & K_{15} = k_{15}k_{17}k_{18} \\
K_{2} = k_1 k_4 & K_{9} = k_9k_{12}k_{14} & K_{16} = (k_{22}+k_{23})k_{24}k_{30}((k_{28}+\fp_2+k_{31})k_{26}+k_{25}(k_{28}+k_{31}))\fp_1\\
K_{3} = k_1 k_3 & K_{10} = k_9k_{11}(k_{13}+k_{14}) & K_{17} = k_{21}(k_{22}+k_{23})k_{24}k_{30}((k_{28}+\fp_2+k_{31})k_{26}+k_{25}(k_{28}+k_{31}))\\
K_{4} = k_6 & K_{11} = k_9k_{11}k_{12} & K_{18} = k_{21}(k_{22}+k_{23})k_{24}(k_{25}+k_{26})k_{27}k_{30}\\
K_{5} = k_5 & K_{12} = (k_{16}+k_{17})k_{18}k_{20} & K_{19} = k_{21}(k_{22}+k_{23})(k_{25}+k_{26})k_{27}k_{30}\fp_2\\
K_{6} = k_8 & K_{13} = k_{15}k_{18}k_{20} & K_{20} = k_{21}k_{24}k_{30}((k_{28}+\fp_2+k_{31})k_{26}+k_{25}(k_{28}+k_{31}))\fp_1\\
K_{7} = k_7 & K_{14} = k_{15}k_{17}(k_{19}+k_{20})& K_{22} = k_{21}(k_{22}+k_{23})k_{24}k_{27}k_{30}\fp_2 \\
\hline \multicolumn{3}{|l|}{\begin{array}{l} K_{21} = ((((k_{28}+\fp_2+k_{31})k_{29}+(\fp_1+k_{27})k_{31}+\fp_2k_{27}+\fp_1(\fp_2+k_{28}))k_{26}+k_{25}((k_{28}+k_{31})k_{29}+(\fp_1+k_{27})k_{31}+\fp_1k_{28}))k_{23}\\ \hspace{1.5in} + k_{22}(((k_{28}+\fp_2+k_{31})k_{29}+k_{27}(k_{31}+\fp_2))k_{26}+k_{25}((k_{28}+k_{31})k_{29}+k_{31}k_{27})))k_{24}k_{21}\end{array}} \\
\hline
\end{array}\]

As a result, the parametrization~\eqref{eq:param} amounts to
\small
\begin{multicols}{3} 
\noindent 
$x_1 = \left( \frac{K_{3} K_{10} K_{19}}{K_{1} K_{8} K_{18}} \right) \fpp_1 \fpp_3\\ 
x_2 = \left( \frac{K_{3} K_{10} K_{19}}{K_{2} K_{8} K_{18} } \right) \fpp_1 \fpp_3\\
x_3 = \left( \frac{K_{10} K_{19}}{ K_{8} K_{18} } \right)\fpp_1 \fpp_3\\
x_4 = \left( \frac{K_{17}}{K_{16}} \right)\\
x_5 = \left( \frac{K_{5} K_{10}}{ K_{4} K_{11} }\right) \fpp_1\\
x_6 = \left( \frac{K_{18}}{K_{19}} \right)\\
x_7 = \left( \frac{K_{10}}{K_{11}} \right) \fpp_1\\
x_8 = \left( \frac{K_{3} K_{4} K_{11} K_{12} K_{17} K_{19} }{ K_{2} K_{5} K_{8} K_{15} K_{18} K_{16} } \right) \fpp_3\\
x_9 = \fpp_3\\
x_{10} = \left( \frac{K_{21}}{K_{17}} \right)\\
x_{11} = \left( \frac{K_{21}}{K_{18}} \right)\\
x_{12} = \left( \frac{K_{7} K_{18}}{ K_{6} K_{21} }\right) \fpp_2\\
x_{13} = \fpp_2\\
x_{14} = \left( \frac{K_{3} K_{10} K_{12} K_{17} K_{19}}{ K_{2} K_{8} K_{13} K_{18} K_{16} } \right) \fpp_1 \fpp_3\\
x_{15} = \left( \frac{K_{10}}{K_{9}} \right) \fpp_1 \fpp_3\\
x_{16} = \left( \frac{K_{3} K_{10} K_{12} K_{17} K_{19}}{ K_{2} K_{8} K_{14} K_{18} K_{16} } \right) \fpp_1 \fpp_3\\
x_{17} = \fpp_1 \fpp_3\\
x_{18} = \left( \frac{K_{21}}{K_{20}} \right) \\
x_{19} = \left( \frac{K_{21}}{K_{22}} \right)$ \end{multicols}
\noindent \normalsize 
with $K=K(k^*,\fp)$ as above
and $\fp_1, \fp_2, \fpp_1, \fpp_2, \fpp_3 > 0$. 
\end{exa}

\section{Outlook}
\label{sec:conclusions}

We have presented sufficient conditions for determining whether the set of positive equilibria of a generalized mass-action system coincides with the parametrized set of complex-balanced equilibria. We have also presented sufficient conditions for guaranteeing a positive parametrization of the set of complex-balanced equilibria and for effectively constructing the parametrization. Through an extension of network translations \cite{J2014}, we have shown how the result can be immediately applied to biochemical reaction networks, including the EnvZ-OmpR signaling pathway \cite{Sh-F} and shuttled WNT signaling pathway \cite{G-H-R-S}.

A number of potential avenues for further research naturally emerge from this work.
\begin{enumerate}
\item
Recent work on generalized mass-action systems has established sign conditions sufficient for the uniqueness of equilibrium points in compatibility classes \cite{BanajiPantea16, M-F-R-C-S-D}. In particular, when the steady state set is \emph{toric} or \emph{complex-balanced}, uniqueness and multistationarity may be established \cite{MR2012,M-D-S-C}. It is currently unclear, however, whether the extension to rational parametrizations in Theorem~\ref{main2} might be utilized to guarantee either uniqueness or multistationarity. 
\item
For GCRNs with nonzero kinetic deficiency ($\tilde \delta > 0$), statement 2 in Theorem~\ref{main2} guarantees that, if the parameters $\fp \in \RR_{> 0}^{E_0}$ can be chosen to satisfy the $\tilde \delta > 0$ conditions for complex-balancing, then the system has a monomial parametrization. It is currently unclear which conditions guarantee that a set of free parameters $\fp \in \RR_{> 0}^{E_0}$ may satisfy the $\tilde \delta > 0$ algebraic conditions on the rate parameters required for complex balancing.
\item
Even for biochemical networks of moderate size, it is difficult to determine a translation scheme for constructing a GCRN corresponding to the original CRN. Computational approaches to network translation have been investigated in \cite{J2015} and \cite{Tonello2017}. These works, however, rely on the definitions of a GCRN in \cite{MR2012} and of network translation in \cite{J2014}. Using the more general definitions in \cite{MR2014} would allow to extend the applicability of the computational approaches to a significantly broader class of networks.
\end{enumerate}

\subsection*{Acknowledgments}

This project began during the 
`SQuaRE' \emph{Dynamical properties of deterministic and stochastic models of reaction networks}
at the American Institute of Mathematics (AIM), San Jose, California,
in October 2017. 

SM was supported by the Austrian Science Fund (FWF), project P28406.
CP was supported by NSF-DMS award 1517577.

\begin{thebibliography}{10}

\bibitem{BanajiPantea16}
Murad Banaji and Casian Pantea.
\newblock Some results on injectivity and multistationarity in chemical reaction networks. 
\newblock {\em SIAM J. Appl. Dyn. Syst.}, 15:807-869, 2016.

\bibitem{Israel2003}
Adi Ben-Israel and Thomas~N.E. Greville.
\newblock {\em Generalized inverses: Theory and applications}.
\newblock Springer, New York, 2nd edition, 2003.

\bibitem{C-F-M-W2016}
Carsten Conradi, Elisenda Feliu, Maya Mincheva, and Carsten Wiuf.
\newblock Identifying parameter regions for multistationarity.
\newblock {\em PLoS Comput. Biol.}, 13(10):e1005751, 2016.

\bibitem{C-F-R}
Carsten Conradi, Dietrich Flockerzi, and Jorg Raisch.
\newblock Multistationarity in the activation of a {MAPK}: Parametrizing the
  relevant region in parameter space.
\newblock {\em Math. Biosci.}, 211:105--131, 2008.

\bibitem{Conradi2017}
Carsten Conradi and Maya Mincheva.
\newblock Graph-theoretic analysis of multistationarity using degree theory.
\newblock {\em Math. Comput. Simulation}, 133:76--90, 2017.

\bibitem{C-D-S-S}
Gheorghe Craciun, Alicia Dickenstein, Anne Shiu, and Bernd Sturmfels.
\newblock Toric dynamical systems.
\newblock {\em J. Symbolic Comput.}, 44(11):1551--1565, 2009.

\bibitem{F3}
Martin Feinberg.
\newblock Lectures on chemical reaction networks.
\newblock Unpublished written versions of lectures given at the Mathematics
  Research Center, University of Wisconsin, 1979.

\bibitem{F1}
Martin Feinberg.
\newblock Complex balancing in general kinetic systems.
\newblock {\em Arch. Ration. Mech. Anal.}, 49:187--194, 1972.

\bibitem{Fe2}
Martin Feinberg.
\newblock Chemical reaction network structure and the stability of complex
  isothermal reactors: {I.} the deficiency zero and deficiency one theorems.
\newblock {\em Chem. Eng. Sci.}, 42(10):2229--2268, 1987.

\bibitem{F2}
Martin Feinberg.
\newblock The existence and uniqueness of steady states for a class of chemical
  reaction networks.
\newblock {\em Arch. Ration. Mech. Anal.}, 132:311--370, 1995.

\bibitem{G-H-R-S}
E.~Gross, H.A. Harrington, Z.~Rosen, and B.~Sturmfels.
\newblock Algebraic {S}ystems {B}iology: {A} {C}ase {S}tudy for the {W}nt
  {P}athway.
\newblock {\em Bull. Math. Biol.}, 78:21--51, 2016.

\bibitem{H-F-C}
Katharina Holstein, Dietrich Flockerzi, and Carsten Conradi.
\newblock Multistationarity in sequentially distributed multisite
  phosphorylation networks.
\newblock {\em Bull. Math. Biol.}, 75(11):2028--2058, 2013.

\bibitem{H}
Fritz Horn.
\newblock Necessary and sufficient conditions for complex balancing in chemical
  kinetics.
\newblock {\em Arch. Ration. Mech. Anal.}, 49:172--186, 1972.

\bibitem{H-J1}
Fritz Horn and Roy Jackson.
\newblock General mass action kinetics.
\newblock {\em Arch. Ration. Mech. Anal.}, 47:81--116, 1972.

\bibitem{J2014}
Matthew~D. Johnston.
\newblock Translated chemical reaction networks.
\newblock {\em Bull. Math. Biol.}, 76(5):1081--1116, 2014.

\bibitem{J2015}
Matthew~D. Johnston.
\newblock A computational approach to steady state correspondence of regular
  and generalized mass action systems.
\newblock {\em Bull. Math. Biol.}, 77(6):1065--1100, 2015.

\bibitem{Karp}
R.L. Karp, M.~P\'{e}rez Mill\'{a}n, T.~Dasgupta, A.~Dickenstein, and
  J.~Gunawardena.
\newblock Complex-linear invariants of biochemical networks.
\newblock {\em J. Theor. Biol.}, 311:130--138, 2012.

\bibitem{MD2017}
Mercedes~P\'{e}rez Mill\'{a}n and Alicia Dickenstein.
\newblock The structure of {MESSI} biological systems.
\newblock 2016.
\newblock Available on the ArXiv at arxiv:1612.08763.

\bibitem{M-D-S-C}
Mercedes~P\'{e}rez Mill\'{a}n, Alicia Dickenstein, Anne Shiu, and Carsten
  Conradi.
\newblock Chemical reaction systems with toric steady states.
\newblock {\em Bull. Math. Biol.}, 74(5):1027--1065, 2012.

\bibitem{M-F-R-C-S-D}
Stefan M\"{u}ller, Elisenda Feliu, Georg Regensburger, Carsten Conradi, Anne
  Shiu, and Alicia Dickenstein.
\newblock Sign conditions for injectivity of generalized polynomial maps with
  applications to chemical reaction networks and real algebraic geometry.
\newblock {\em Found. Comput. Math.}, 16(1):69--97, 2016.

\bibitem{MR2012}
Stefan M\"{u}ller and Georg Regensburger.
\newblock Generalized mass action systems: Complex balancing equilibria and
  sign vectors of the stoichiometric and kinetic-order subspaces.
\newblock {\em SIAM J. Appl. Math.}, 72(6):1926--1947, 2012.

\bibitem{MR2014}
Stefan M\"{u}ller and Georg Regensburger.
\newblock Generalized mass-action systems and positive solutions of polynomial
  equations with real and symbolic exponents (invited talk).
\newblock In Gerdt V.P., Koepf W., Seiler W.M., and Vorozhtsov E.V., editors,
  {\em Computer Algebra in Scientific Computing. CASC 2014. Lecture Notes in
  Computer Science}, volume 8660, pages 302--323. Springer, 2014.

\bibitem{Sh-F}
Guy Shinar and Martin Feinberg.
\newblock Structural sources of robustness in biochemical reaction networks.
\newblock {\em Science}, 327(5971):1389--1391, 2010.

\bibitem{T-G}
Matthew Thomson and Jeremy Gunawardena.
\newblock The rational parameterisation theorem for multisite
  post-translational modification systems.
\newblock {\em J. Theor. Biol.}, 261(4):626--636, 2009.

\bibitem{Tonello2017}
Elisa Tonello and Matthew~D. Johnston.
\newblock Network translation and steady state properties of chemical reaction
  systems.
\newblock Accepted in {\em Bull. Math. Biol.}, 2018.

\end{thebibliography}

\end{document}